 \numberwithin{equation}{section}
\theoremstyle{plain}
\newtheorem{thm}{Theorem}[section]
\newtheorem{cor}[thm]{Corollary}
\newtheorem{lem}[thm]{Lemma}
\newtheorem{prop}[thm]{Proposition}
\newtheorem*{theorem}{Theorem}
\theoremstyle{definition}
\theoremstyle{remark}
\newtheorem{rem}[thm]{Remark}
\newcommand{\N}{\mathbb{N}}
\newcommand{\R}{\mathbb{R}}
\newcommand{\bp}{\begin{proof}[\ensuremath{\mathbf{Proof}}]}
\newcommand{\bs}{\begin{proof}[\ensuremath{\mathbf{Solution}}]}
\newcommand{\ep}{\end{proof}}
\newcommand{\be}{\begin{equation}}
\newcommand{\ee}{\end{equation}}
\DeclareMathOperator*{\argmin}{arg\,min}
\begin{document}

\title{Continuous time approximation of Nash equilibria}

\author{Romeo Awi\footnote{Department of Mathematics, Hampton University.}\;, Ryan Hynd\footnote{Department of Mathematics, University of Pennsylvania. }\;, and Henok Mawi\footnote{Department of Mathematics, Howard University.}\;}

\maketitle

\begin{abstract} 
We consider the problem of approximating Nash equilibria of $N$ functions $f_1,\dots, f_N$ of $N$ variables.  In particular, we deduce conditions under which systems of the form
$$
\dot u_j(t)=-\nabla_{x_j}f_j(u(t))
$$
$(j=1,\dots, N)$  are well posed and in which the large time limits of their solutions $u(t)=(u_1(t),\dots, u_N(t))$ are Nash equilibria for $f_1,\dots, f_N$.  To this end, we will invoke the theory of maximal monotone operators. We will also identify an application of these ideas in game theory and show how to approximate equilibria in some game theoretic problems in function spaces. 
\end{abstract}

\section{Introduction}
Let us first recall the notion of a Nash equilibrium. Consider a collection of $N$ sets $X_1,\dots, X_N$ and define
$$
X:=X_1\times \dots \times X_N.
$$
For a given $x=(x_1,\dots, x_N)\in X$, $j\in \{1,\dots, N\}$, and $y_j\in X_j$, we will use the notation $(y_j,x_{-j})$ for the point in $X$ in which 
$y_j$ replaces $x_j$ in the coordinates of $x$. That is,
$$
(y_j,x_{-j}):=(x_1,\dots, x_{j-1},y_j ,x_{j+1},\dots, x_N).
$$
A collection of functions $f_1,\dots, f_N: X\rightarrow \R$ has a {\it Nash equilibrium} at $x\in X$ provided 
$$
f_j(x)\le f_j(y_j,x_{-j})
$$
for all $y_j\in X_j$ and $j=1,\dots, N$.

\par Nash recognized that an equilibrium is a fixed point of the set valued mapping
$$
X\ni x\mapsto \argmin_{y\in X}\left\{\sum^N_{j=1}f_j(y_j,x_{-j})\right\}.
$$
In particular, he applied Kakutani's fixed point theorem \cite{MR4776} to show that if $X_1,\dots, X_N$ are nonempty, convex, compact subsets of Euclidean space and each $f_1,\dots, f_N$ is multilinear, then $f_1,\dots, f_N$ has a Nash equilibrium at some $x\in X$ \cite{MR43432,MR31701}.   Nash was interested in multilinear $f_1,\dots, f_N$ as they correspond to the expected cost of players assuming mixed strategies in noncooperative games. More generally, his existence theorem holds if each $f_j: X\rightarrow \R$ is continuous and
\be\label{fjayConvex}
X\ni y\mapsto \sum^N_{j=1}f_j(y_j,x_{-j}) \text{ is convex for each $x\in X$. }
\ee

\par As the existence of a Nash equilibrium is due to a nonconstructive fixed point theorem, it seems unlikely that there are good ways to approximate these points.  Indeed, it has been established that the approximation of Nash equilibria is computationally challenging \cite{MR2506524,MR2510264,10.1145/779928.779933}.   Nevertheless, we contend that there is a nontrivial class of functions $f_1,\dots, f_N$ for which the approximation of Nash equilibria is at least theoretically feasible.  We will explain below that a sufficient condition for the approximation of Nash equilibria is that $f_1,\dots, f_N$ satisfy
\be\label{f1thrufNmonotone}
\sum^N_{j=1}(\nabla_{x_j}f_j(x)-\nabla_{x_j}f_j(y))\cdot (x_j-y_j)\ge 0
\ee
for each $x,y\in X$.  Here we are considering each $X_j$ as a subset of a Euclidean space and use `$\cdot$' to denote the dot products on any of these spaces. 

\par Our approach starts with the observation that if \eqref{fjayConvex} holds and $X_1,\dots, X_N$ are convex, then $x$ is a Nash equilibrium if and only if 
\be\label{f_1thrufNVI}
\sum^N_{j=1}\nabla_{x_j}f_j(x)\cdot (y_j-x_j)\ge 0
\ee
for each $y\in X$. As a result, it is natural to consider the differential inequalities 
\be\label{f1thrufNODE}
\sum^N_{j=1}(\dot u_j(t)+\nabla_{x_j}f_j(u(t)))\cdot (y_j-u_j(t))\ge 0
\ee
for $t\ge 0$ and $y\in X$.  Here 
$$
u:[0,\infty)\rightarrow X;t\mapsto (u_1(t),\dots, u_N(t))
$$
is the unknown.  We will below argue that for appropriately chosen initial condition $u(0)\in X$, the {\it Ces\`aro mean} of $u$
\be
t\mapsto\frac{1}{t}\int^t_0 u(s)ds
\ee
will converge to a Nash equilibrium of $f_1,\dots, f_N$ as $t\rightarrow\infty$ provided that \eqref{f1thrufNmonotone} holds. 

\subsection{A few concrete examples}
\par  An elementary example which illustrates this approach may be observed when $X_1=X_2=[-1,1]$, $X=[-1,1]^2$, 
$$
f_1(x_1,x_2)=x_1x_2,\;\;\text{and}\;\;f_2(x_1,x_2)=-x_1x_2.
$$ 
It is not hard to check that the origin $(0,0)$ is the unique Nash equilibrium of $f_1,f_2$ and that $f_1,f_2$ satisfy \eqref{f1thrufNmonotone} (with equality holding) for each $x,y\in[-1,1]^2$.  We now seek an absolutely continuous
path 
$$
u:[0,\infty)\rightarrow [-1,1]^2
$$ 
such that 
\be\label{ExampleODE}
(\dot u_1(t)+u_2(t)) (y_1-u_1(t))+(\dot u_2(t)-u_1(t)) (y_2-u_2(t))\ge 0
\ee
holds for almost every $t\ge 0$ and each $(y_1,y_2)\in [-1,1]$. 

\par It turns out that if 
$$
u_1(0)^2+u_2(0)^2\le 1,
$$
then the unique solution of these differential inequalities is 
$$
\begin{cases}
\displaystyle u_1(t)=u_1(0)\cos(t)-u_2(0)\sin(t)
\\\\
\displaystyle u_2(t)=u_2(0)\cos(t)+u_1(0)\sin(t).
\end{cases}
$$
This solution simply parametrizes the circle of radius $\sqrt{u_1(0)^2+u_2(0)^2}$. In particular,
$$
\lim_{t\rightarrow \infty}\frac{1}{t}\int^t_0u(s)ds=(0,0).
$$
For general initial conditions $(u_1(0),u_2(0))\in [-1,1]^2$, it can be shown that $u_1(s)^2+u_2(s)^2\le 1$ in a finite time $s\ge 0$.  Refer to Figure \ref{Fig1} for a schematic. It then follows that the same limit above holds for this solution.
\begin{figure}[h]
\centering
 \includegraphics[width=.6\textwidth]{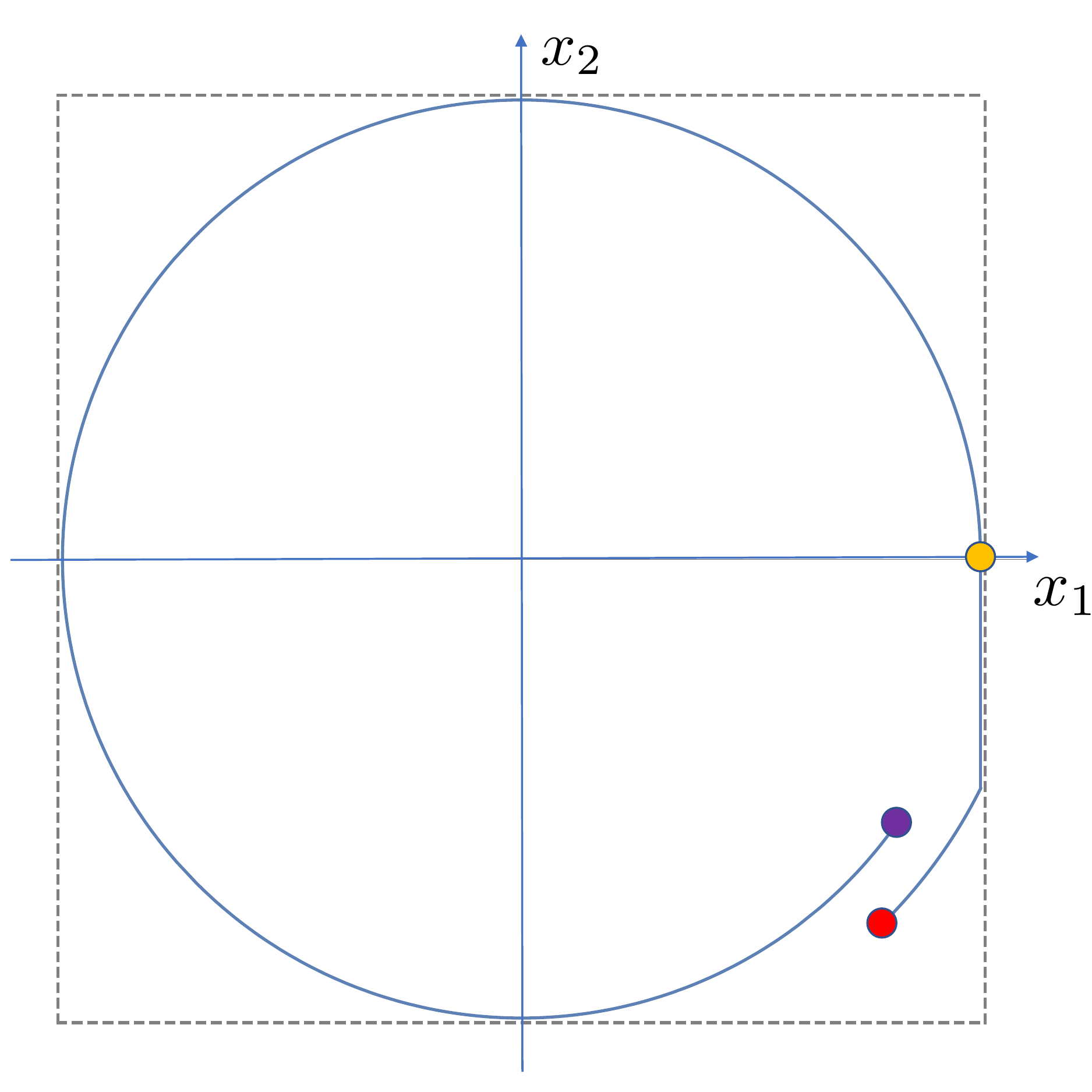}
 \caption{Plot of a solution $u$ of \eqref{ExampleODE} where the initial position of $u$ is indicated in red.  This solution starts out on a circle centered at the origin traversing it counterclockwise until it hits the boundary line segment $(x_1=1, -1\le x_2\le 1)$. Then it proceeds upwards along this boundary line segment until it arrives at some time $s$ at the orange marker which is located at $(1,0)$.  For times $t\ge s$, the position $u(t)$ as shown in purple remains on the unit circle traversing it counterclockwise. }\label{Fig1}
\end{figure}
\\
\par The method we present can be extended in certain cases when $X_1,\dots, X_N$ are not compact. For example, suppose $X_1=X_2=\R$, 
$$
f_1(x_1,x_2)=\frac{1}{2}x_1^2-x_1x_2-x_1,\;\;\text{and}\;\; f_2(x_1,x_2)=\frac{1}{2}x_2^2+x_1x_2-2x_2.
$$
It is easy to check that $f_1,f_2$ satisfy \eqref{f1thrufNmonotone}.  Furthermore, 
$$
\begin{cases}
\partial_{x_1}f_1(x_1,x_2)=x_1-x_2-1=0\\\\
\partial_{x_2}f_2(x_1,x_2)=x_2+x_1-2=0
\end{cases}
$$
provided
$$
x_1=\frac{3}{2}\quad \text{and}\quad x_2=\frac{1}{2}.
$$
As a result, this is the unique Nash equilibrium of $f_1, f_2$. 

\par  We  note that the solution of
$$
\begin{cases}
\dot u_1(t)=-(u_1(t)-u_2(t)-1)\\\\
\dot u_2(t)=-(u_2(t)+u_1(t)-2)
\end{cases}
$$
is given by
$$
\begin{cases}
\displaystyle u_1(t)=\frac{3}{2}+\left(u_1(0)-\frac{3}{2}\right)e^{-t}\cos(t)+
\left(u_2(0)-\frac{1}{2}\right)e^{-t}\sin(t)\\\\
\displaystyle u_2(t)=\frac{1}{2}+\left(u_2(0)-\frac{1}{2}\right)e^{-t}\cos(t)+
\left(\frac{3}{2}-u_1(0)\right)e^{-t}\sin(t).
\end{cases}
$$
It is now clear that 
$$
\lim_{t\rightarrow\infty}(u_1(t),u_2(t))=\left(\frac{3}{2},\frac{1}{2}\right).
$$
In particular, we do not need to employ the Ces\`aro mean of $u$ in order to approximate the Nash equilibrium of $f_1,f_2$.

\subsection{A general setting}\label{GenSettingSubsection} 
It just so happens that our method does not rely on the spaces $X_1,\dots, X_N$ being finite dimensional. Consequently, we will consider the following version of our approximation problem. Let $V$ be a reflexive Banach space over $\R$ with norm $\|\cdot \|$ and continuous dual $V^*$.  We further suppose 
\be
X_1,\dots, X_N\subset V\; \text{are closed and convex with nonempty interiors}
\ee
and consider $N$ functions  
\be
f_1,\dots, f_N: X\rightarrow \R
\ee
which satisfy
\be\label{StandardAssumpf1thrufN}
\begin{cases}
f_1,\dots, f_N\;\text{are weakly lowersemicontinuous},
\\\\
\displaystyle X\ni y\mapsto \sum^N_{j=1}f_j(y_j,x_{-j})\;\text{is convex for each $\displaystyle x\in X$,}
\\\\
X\ni x\mapsto \displaystyle \sum^N_{j=1} f_j(y_j,x_{-j})\;\text{is weakly continuous for each $y\in X$, and}
\\\\
\displaystyle\sum^N_{j=1}(\partial_{x_j}f_j(x)-\partial_{x_j}f_j(y), x_j-y_j)\ge 0\;\; \text{for each $x,y\in X$}.
\end{cases}
\ee
In the last condition listed above, we mean 
$$
\sum^N_{j=1}(p_j-q_j, x_j-y_j)\ge 0
$$
for each 
$$
\displaystyle p_j\in \partial_{x_j}f_j(x):=\left\{\zeta\in V^*: f_j(z,x_{-j})\ge f_j(x)+(\zeta,z-x_j),\; z\in X_j\right\}
$$ 
and $q_j\in \partial_{x_j}f_j(y)$ for $j=1,\dots, N$.

\par When $V$ is finite dimensional, we naturally identify $V$ and $V^*$ with Euclidean space of the same dimension. Alternatively, when $V$ is infinite dimensional, we will suppose there is a real Hilbert space $H$ for which 
$$
V\subset H\subset V^*
$$
and $V\subset H$ is continuously embedded.  It is with respect to this space $H$ in which we consider the following initial value problem: for a given $u^0\in X$, find an absolutely continuous $u:[0,\infty)\rightarrow H^N$ such that 
\be\label{flowIVP}
\begin{cases}
u(0)=u^0,\;\text{and}
\\\\
 \displaystyle\sum^N_{j=1}(\dot u_j(t)+ \partial_{x_j}f_j(u(t)), y_j-u_j(t))\ge 0\; \text{for a.e. $t\ge 0$ and each $y\in X$.}
\end{cases}
\ee

\par Our central result involving this initial value problem is as follows.  In this statement, we will make use of the set
\be\label{OurDomain}
{\cal D}=\left\{x\in X:\bigcap^N_{j=1}\left(\partial_{x_j}f_j(x)+n_{X_j}(x_j)\right)\cap H\neq \emptyset \right\}.
\ee
Here $n_{X_j}: V\rightarrow 2^{V^*}$ is the normal cone of $X_j$ defined as
\be\label{NormCone}
n_{X_j}(z):=\{\zeta\in V^*: ( \zeta,y-z)\le 0\;\text{all $y\in X_j$}\}
\ee
for $z\in V$ $(j=1,\dots, N)$.

\begin{thm}\label{mainthm1}
Assume $f_1,\dots, f_N$ satisfy \eqref{StandardAssumpf1thrufN}.
\begin{enumerate}[$(i)$] 

\item For any $u^0\in {\cal D}$ there is a unique Lipschitz continuous $u:[0,\infty)\rightarrow H^N$ satisfying $u(t)\in {\cal D}$ 
for each $t\ge 0$ and the initial value problem \eqref{flowIVP}. \\

\item If $f_1,\dots, f_N$ has a Nash equilibrium, then 
\be
\frac{1}{t}\int^t_0 u(s)ds
\ee
converges weakly in $H$ to a Nash equilibrium of $f_1,\dots, f_N$ as $t\rightarrow \infty$.
\end{enumerate}
\end{thm}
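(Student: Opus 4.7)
My plan is to recast the evolution variational inequality in~\eqref{flowIVP} as an abstract Cauchy problem of the form $\dot u(t) + Au(t)\ni 0$ governed by a maximal monotone multivalued operator $A$ on the Hilbert space $H^N$, then apply Br\'ezis' theory of evolution equations driven by maximal monotone operators to obtain part~$(i)$, and finally exploit the distance-decreasing property together with a Ces\`aro/Opial type argument for part~$(ii)$.

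Concretely, I would define $A:H^N\to 2^{H^N}$ with domain ${\cal D}$ by declaring $\xi=(\xi_1,\dots,\xi_N)$ to lie in $A(x)$ exactly when $\xi_j\in\bigl(\partial_{x_j}f_j(x)+n_{X_j}(x_j)\bigr)\cap H$ for every $j$, so that \eqref{flowIVP} is equivalent to $\dot u(t)\in -A(u(t))$ for a.e.~$t$. Monotonicity of $A$ is a short calculation: given $\xi\in A(x)$ and $\eta\in A(y)$, decompose $\xi_j=p_j+r_j$, $\eta_j=q_j+s_j$ with $p_j\in\partial_{x_j}f_j(x)$, $q_j\in\partial_{x_j}f_j(y)$, $r_j\in n_{X_j}(x_j)$, $s_j\in n_{X_j}(y_j)$, and add the monotonicity assumption in \eqref{StandardAssumpf1thrufN} to the bounds $(r_j,x_j-y_j)\ge 0$ and $(-s_j,x_j-y_j)\ge 0$ supplied by \eqref{NormCone}. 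The delicate step, and in my view the main obstacle, is \emph{maximality}: by Minty's theorem I must show that $\mathrm{range}(I+A)=H^N$, i.e.\ that for every $h\in H^N$ the variational inequality
$$
\sum_{j=1}^N(x_j-h_j+p_j,\,y_j-x_j)\ge 0,\qquad y\in X,
$$
admits a solution $x\in{\cal D}$ with some selection $p_j\in\partial_{x_j}f_j(x)$. The strong monotonicity of the identity term provides coercivity, so I would approach this via a Browder--Hartman--Stampacchia argument for monotone set-valued variational inequalities on the reflexive space $V^N$, or equivalently by constructing $x$ as a weak limit of finite-dimensional Galerkin approximations, using the weak continuity and convexity hypotheses in \eqref{StandardAssumpf1thrufN} to pass to the limit. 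Once maximal monotonicity of $A$ on $H^N$ is in hand, the classical Br\'ezis theorem on evolution in Hilbert space yields a unique Lipschitz $u:[0,\infty)\to H^N$ satisfying $u(t)\in D(A)={\cal D}$ for every $t\ge 0$ and $\dot u\in -A(u)$ almost everywhere, giving part~$(i)$.

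For part~$(ii)$, observe first that the convexity condition in \eqref{StandardAssumpf1thrufN} identifies the Nash equilibria of $f_1,\dots,f_N$ with the zeros of $A$: the Nash condition $0\in\partial_{x_j}f_j(x^*)+n_{X_j}(x_j^*)$ for each $j$ is precisely $0\in A(x^*)$. Given such $x^*$, monotonicity yields $(\dot u(t),u(t)-x^*)_H\le 0$ for a.e.~$t$, so $t\mapsto\|u(t)-x^*\|_H$ is nonincreasing and $u$ is bounded in $H^N$. For any test pair $y\in{\cal D}$ and $q\in A(y)$, monotonicity also gives $(q+\dot u(s),y-u(s))_H\ge 0$ for a.e.\ $s$; integrating over $[0,t]$ and dividing by $t$ produces, with $v(t):=\tfrac{1}{t}\int_0^t u(s)\,ds$,
$$
(q,\,v(t)-y)_H\le\frac{1}{2t}\|u^0-y\|_H^2.
$$
Boundedness of $u$ yields a subsequence along which $v(t_n)\rightharpoonup\bar x$ weakly in $H^N$, with $\bar x$ in the weakly closed set $X$. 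Passing to the limit along this subsequence produces $(q,\bar x-y)_H\le 0$ for all $y\in D(A)$ and $q\in A(y)$, and the maximal monotonicity of $A$ then forces $0\in A(\bar x)$, so $\bar x$ is a Nash equilibrium.

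To upgrade subsequential to full convergence of $v(t)$, I would invoke an Opial-type argument. The previous step, applied with $x^*=\bar x$, shows $t\mapsto\|u(t)-\bar x\|_H$ is nonincreasing for \emph{every} weak Ces\`aro limit $\bar x$. If $\bar x$ and $\bar y$ are two such limits, the identity
$$
\|u(t)-\bar x\|_H^2-\|u(t)-\bar y\|_H^2=2(u(t),\bar y-\bar x)_H+\|\bar x\|_H^2-\|\bar y\|_H^2
$$
shows $(u(t),\bar y-\bar x)_H$ converges as $t\to\infty$, hence so does its Ces\`aro mean $(v(t),\bar y-\bar x)_H$. Evaluating the common limit along the subsequences $v(t_n)\rightharpoonup\bar x$ and $v(s_m)\rightharpoonup\bar y$ forces $\|\bar y-\bar x\|_H^2=0$, completing part~$(ii)$.
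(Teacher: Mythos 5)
Your overall architecture matches the paper's: define $A$ on $H^N$ with domain ${\cal D}$, prove monotonicity, reduce \eqref{flowIVP} to $\dot u+Au\ni 0$, invoke Br\'ezis's existence theorem for $(i)$, and use Ces\`aro averaging for $(ii)$. Your treatment of part $(ii)$ is a correct self-contained re-derivation of the Baillon--Br\'ezis theorem (which the paper simply cites), and the Opial-type uniqueness argument at the end is sound. The monotonicity computation is also the same as the paper's.

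The genuine gap is the maximality step, which you correctly flag as ``the main obstacle'' but do not actually carry out. You propose to solve the resolvent variational inequality \eqref{SolvingxplusAxequaly} by a Browder--Hartman--Stampacchia surjectivity theorem for set-valued monotone operators, or by Galerkin approximation. The difficulty is that those tools require some continuity of the set-valued map $x\mapsto(\partial_{x_1}f_1(x),\dots,\partial_{x_N}f_N(x))$ itself (hemicontinuity, pseudomonotonicity, or upper semicontinuity of selections), and the hypotheses \eqref{StandardAssumpf1thrufN} provide nothing of the sort: they are stated at the level of the functions $f_j$ (weak lower semicontinuity, convexity in the diagonal variable, weak continuity in the off-diagonal variables). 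Indeed the paper reserves the Browder--Minty route for Theorem \ref{mainthm2}, where single-valuedness and hemicontinuity are \emph{added} as hypotheses. Under \eqref{StandardAssumpf1thrufN} alone, the paper instead observes that \eqref{SolvingxplusAxequaly} is itself a Nash-equilibrium problem for the strongly monotone auxiliary functions $g_j(x)=\tfrac12\|x_j\|^2-(y_j,x_j)+f_j(x)$, truncates to $X^r=\{x\in X:\|x\|\le r\}$, applies the Kakutani--Glicksberg--Fan fixed point theorem to the best-response map $\Phi^r$ (whose nonempty convex values and weakly closed graph are exactly what the hypotheses deliver), derives the uniform bound $\|x^r-u^0\|\le\|w\|_*$ from strong monotonicity, and passes to a weak limit; the nonempty-interior assumption on the $X_j$ enters via the Pshenichnii--Rockafellar condition to convert the resulting Nash equilibrium of $g_1,\dots,g_N$ into the membership $y-x\in Ax$. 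Without some such fixed-point argument, your proof of maximality --- and hence of both parts of the theorem --- is incomplete.
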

\par We note that if $X_1=\cdots=X_N=V$, then $n_{X_1}\equiv\cdots \equiv n_{X_N} \equiv \{0\}$. In this case 
\be
{\cal D}=\left\{x\in X:\bigcap^N_{j=1}\left(\partial_{x_j}f_j(x)\cap H\right)\neq \emptyset \right\},
\ee
and the initial value problem reduces to 
\be\label{flowIVP2}
\begin{cases}
u(0)=u^0,\;\text{and}
\\\\
 \dot u_j(t)+ \partial_{x_j}f_j(u(t))\ni 0\; \text{for a.e. $t\ge 0$ and each $j=1,\dots, N$}.
\end{cases}
\ee
We will also explain that if in addition $(\partial_{x_1}f_1,\dots, \partial_{x_N}f_N)$ is
single-valued, everywhere-defined, monotone, and hemicontinuous we can obtain the same result as above without assuming \eqref{StandardAssumpf1thrufN}. This follows directly from a theorem of Browder and Minty.

\par We will phrase our initial value problem \eqref{flowIVP} as the evolution generated by a monotone operator on $H^N$ with domain ${\cal D}$. According to pioneering work of Br\'{e}zis \cite{MR0348562}, the crucial task will be  to verify the maximality of this operator.  Regarding the large time behavior of solutions, the convergence to equilibria of maximal monotone operators by the Ces\`aro means of semigroups they generate originates in the work of Br\'{e}zis and Baillon \cite{MR394328}.  We also refer the reader to Chapter 3 of \cite{MR755330} which gives a detailed discussion of this phenomenon.  

\par Of course, the asymptotic statements we made above are predicated on the existence of a Nash equilibrium. This is only guaranteed to be the case if $X$ is weakly compact or if there is some $\theta>0$ such that 
\be
\displaystyle\sum^N_{j=1}(\partial_{x_j}f_j(x)-\partial_{x_j}f_j(y), x_j-y_j)\ge \theta\|x-y\|^2
\ee
for each $x,y\in X$. When such coercivity holds, $f_1,\dots, f_N$ has a unique Nash equilibrium and solutions of the initial value problem \eqref{flowIVP} converge exponentially fast in $H^N$ to this 
equilibrium point. 

\par We will prove Theorem \ref{mainthm1} in the following section. Then we will apply this result to flows in Euclidean spaces of the form \eqref{f1thrufNODE} in section \ref{FiniteDimSect}. Finally, we will consider examples in Lebesgue and Sobolev spaces in section \ref{InFiniteDimSect}. A prototypical collection of functionals we will study is
\be
f_j(v)=\int_{\Omega}\frac{1}{2}|\nabla v_j|^2+F_j(v)dx
\ee
for $j=1,\dots, N$, where $v=(v_1,\dots, v_N)\in H_0^1(\Omega)^N$ and $\Omega\subset \R^d$ is a bounded domain. For this particular example, we will argue that if $F_1,\dots, F_N:\R^N\rightarrow\R$ satisfy suitable growth and monotonicity conditions, then $f_1,\dots,f_N$ has a unique Nash equilibrium which can be approximated by solutions $u_1,\dots, u_N:\Omega\times[0,\infty)\rightarrow\R$ of the parabolic initial/boundary value problem
\be
\begin{cases}
\partial_t u_j =\Delta u_j-\partial_{z_j}F_j(u) \;\; &\text{in}\;\Omega\times(0,\infty)\\
\;\;\;u_j=0 \;\; & \text{on}\; \partial\Omega\times(0,\infty)\\
\;\;\;u_j=u^0_j \;\; & \text{on}\; \Omega\times\{0\}
\end{cases}
\ee
for appropriately chosen initial conditions $u^0_1,\dots, u^0_N: \Omega\rightarrow\R$.

\par We also remark that in finite dimensions, similar results were known to Fl\aa m \cite{MR1201625}. In particular, Fl\aa m seems to be the first author to identify the important role of the monotonicity condition \eqref{f1thrufNmonotone} in approximating Nash equilibrium.  However, Rosen appears to be the first to consider using equations such as \eqref{f1thrufNODE} to approximate equilibrium points \cite{MR194210}. In addition, we would like to point out that the finite dimensional variational inequality \eqref{f_1thrufNVI} and its relation to Nash equilibrium is studied in depth in the monograph by Facchinei and Pang \cite{MR1955648}. In infinite dimensions, there have also been several recent works which discuss discrete time approximations for variational inequalities that correspond to Nash equilibrium \cite{MR3108427, MR2956124,MR3285906,MR2486094,MR3982683}.

\par {\bf Acknowledgements}: This material is based upon work supported by the National Science Foundation
under Grants DMS-1440140, DMS-1554130, and HRD-1700236, the National Security Agency under Grant No.
H98230-20-1-0015, and the Sloan Foundation under Grant No. G-2020-12602 while the
authors participated in a program hosted by the Mathematical Sciences Research Institute in Berkeley, California, during the summer of 2020.

\section{Approximation theorem}
In this section, we will briefly recall the notion of a maximal monotone operator on a Hilbert space and state a few key results 
for these operators.  Then we will apply these results to prove Theorem \ref{mainthm1} and a few related corollaries. 

\subsection{Maximal monotone operators on a Hilbert space}
Let $H$ be Hilbert space with inner product $\langle\cdot,\cdot\rangle$ and norm $|\cdot |$. We will denote $2^H$ as the power set or collection of all subsets of $H$.  We recall that $B: H\rightarrow 2^H$ is monotone if 
$$
\langle p-q,x-y\rangle\ge 0
$$
for all $x,y\in H$, $p\in Bx$, and $q\in By$. Moreover, $B$ is {\it maximally monotone} if the only monotone
$C: H\rightarrow 2^H$ such that $Bx\subset Cx$ for all $x\in H$ is $C=B$.   Minty's well known maximality criterion  is as follows \cite{MR169064}.

\begin{theorem} [Minty's Lemma]
A monotone operator $B:H\rightarrow 2^H$ is maximal if and only if for each $y\in H$, there is a unique $x\in H$ such that 
$$
x+Bx\ni y.
$$
\end{theorem}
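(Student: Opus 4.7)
The statement is an iff, so I would treat each direction separately. For the easy direction, assume every $y \in H$ admits a unique $x$ with $y \in x + Bx$, and let $C: H \to 2^H$ be a monotone extension of $B$. For any $(x_0, p_0) \in \mathrm{graph}(C)$, apply the hypothesis to $y := x_0 + p_0$ to produce $x_1$ with $p_1 := y - x_1 \in Bx_1 \subset Cx_1$; since $p_0 - p_1 = x_1 - x_0$, monotonicity of $C$ forces
\[
0 \le \langle p_0 - p_1, x_0 - x_1 \rangle = -|x_0 - x_1|^2,
\]
so $x_0 = x_1$ and $p_0 = p_1 \in Bx_0$, giving $C = B$.

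For the harder direction, assume $B$ is maximal monotone. Uniqueness in $y \in x + Bx$ is immediate from monotonicity: two candidates $x_1, x_2$ yield $|x_1 - x_2|^2 \le 0$. For existence, fix $y \in H$; translating by $y$ (replacing $B$ with $z \mapsto Bz - y$, which remains maximal monotone) reduces the problem to finding $x$ with $-x \in Bx$. I plan to argue by contradiction: if no such $x$ exists, the goal is to produce $\xi \in H$ with $(\xi, -\xi) \notin \mathrm{graph}(B)$ yet
\[
\langle -\xi - p, \xi - x \rangle \ge 0 \quad \text{for every } (x, p) \in \mathrm{graph}(B),
\]
so that $B \cup \{(\xi, -\xi)\}$ is a proper monotone extension of $B$, contradicting maximality.

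The main obstacle is producing such a $\xi$, and my plan is a two-step approach. First, I would settle the finite-dimensional case directly, exploiting the fact that a maximal monotone operator on $\mathbb{R}^n$ is upper semicontinuous with compact convex values on the interior of its domain, and applying Kakutani's (or Brouwer's) fixed point theorem to the set-valued map $x \mapsto -Bx$ on a sufficiently large ball; coercivity of $I + B$ confines all candidate fixed points to a bounded region, so the hypotheses of the fixed point theorem are met. Second, I would pass to the infinite-dimensional case via a Galerkin scheme: fix an increasing sequence of finite-dimensional subspaces $V_n \subset H$ whose union is dense in $H$, build approximate solutions on each $V_n$ using the finite-dimensional case applied to a projection of $B$, and extract a weak limit in $H$. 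The demi-closedness of the graph of a maximal monotone operator then identifies the weak limit as a genuine solution of $-x \in Bx$, directly contradicting the standing assumption and completing the proof.
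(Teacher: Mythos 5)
The paper does not actually prove this statement: Minty's Lemma is quoted as a classical result with a citation to Minty's original paper, so there is no in-paper argument to compare yours against. On its own terms, your ``easy'' direction is correct (and, as you implicitly use, it needs only solvability, not uniqueness), and the uniqueness half of the ``hard'' direction is the standard one-line monotonicity argument. The entire substance of the theorem is the existence half, and there your plan has genuine gaps.

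Concretely: (1) your finite-dimensional step leans on the fact that a maximal monotone operator on $\R^n$ is locally bounded and upper semicontinuous on the \emph{interior} of its domain, but $\mathrm{int}\,D(B)$ can be empty (take $B=n_K$ for $K$ a single point or a lower-dimensional convex set); moreover Kakutani applied to $x\mapsto y-Bx$ on a large ball requires nonempty compact convex values at \emph{every} point of the ball and that the ball be mapped into itself, neither of which is available when $D(B)$ is small. (2) In the Galerkin step, $D(B)\cap V_n$ may be empty; even when it is not, the projected operator $P_nB|_{V_n}$ is monotone but in general not maximal on $V_n$, so your finite-dimensional case does not apply to it; and the approximate equation controls $|P_nq_n|$ for some $q_n\in Bx_n$ but gives no bound on $|q_n|$ itself, so you cannot extract a weak limit of the $B$-component. (3) The graph of a maximal monotone operator is closed under strong--weak and weak--strong convergence but \emph{not} weak--weak, so ``demi-closedness'' alone does not identify the limit; one needs the Minty trick, i.e.\ $\limsup_n\langle q_n,x_n\rangle\le\langle q,x\rangle$ combined with monotonicity against all of $\mathrm{graph}(B)$. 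The standard argument that realizes your finite-dimensional-reduction idea without these problems is the finite-intersection-property proof: for $y\in H$ the sets $C_{(x,p)}=\{\xi\in H:\langle y-\xi-p,\xi-x\rangle\ge0\}$, indexed by $(x,p)\in\mathrm{graph}(B)$, are closed, convex, and bounded (the quadratic term gives coercivity), hence weakly compact; a KKM/Brouwer argument on $\mathrm{conv}\{x_1,\dots,x_m\}$ shows any finitely many of them intersect; weak compactness then produces $\xi\in\bigcap_{(x,p)}C_{(x,p)}$, and maximality forces $y-\xi\in B\xi$. This never projects $B$, never requires $D(B)$ to have interior, and replaces the contradiction framework by a direct construction.
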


\par Let us now recall a fundamental theorem for the initial value problem 
\be\label{abstractIVP}
\begin{cases}
\dot u(t)+Bu(t)\ni 0, \; \text{a.e.}\; t\ge 0\\
u(0)=u^0.
\end{cases}
\ee
As discussed in Chapter II of \cite{MR0348562} and Chapter 3 of \cite{MR755330}, the initial value problem is well-posed provided that $B$ is maximally monotone and $u^0$ is an element of the domain of $B$
$$
D(B)=\{x\in H: Bx\neq \emptyset\}.
$$
\begin{theorem} [Br\'{e}zis's Theorem]
Suppose $B:H\rightarrow 2^H$ is maximally monotone and $u^0\in D(B)$. Then there is 
 a unique Lipschitz continuous  
$$
u:[0,\infty)\rightarrow H
$$
solution of \eqref{abstractIVP} such that $u(t)\in D(B)$ for all $t\ge 0$. Moreover,
\be\label{DerivativeBoundYou}
|\dot u(t)|\le \min\{|p|: p\in Bu^0\}
\ee
for almost every $t\ge 0$. 
\end{theorem}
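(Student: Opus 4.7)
The plan is to prove Br\'{e}zis's theorem by the classical Yosida regularization method. Because $B$ is maximal monotone, Minty's lemma furnishes, for every $\lambda > 0$, a well-defined single-valued resolvent $J_\lambda := (I+\lambda B)^{-1}:H\to H$ which is nonexpansive. I would then introduce the Yosida approximation
$$B_\lambda := \tfrac{1}{\lambda}(I - J_\lambda),$$
and record its standard properties: $B_\lambda$ is $1/\lambda$-Lipschitz and monotone, the inclusion $B_\lambda x \in B(J_\lambda x)$ holds for every $x\in H$, and $|B_\lambda x|\le |B^0 x|$ for $x\in D(B)$, where $B^0 x$ denotes the minimum-norm element of $Bx$. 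These facts are consequences of monotonicity and the definition of $J_\lambda$, not of any deep analysis.

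With $B_\lambda$ Lipschitz, the regularized Cauchy problem
$$\dot u_\lambda(t) + B_\lambda u_\lambda(t) = 0, \qquad u_\lambda(0)=u^0$$
admits a unique $C^1$ solution on $[0,\infty)$ by Picard-Lindel\"of. The first a priori bound is that $t\mapsto |B_\lambda u_\lambda(t)|$ is nonincreasing, which one verifies by differentiating $|B_\lambda u_\lambda(t)|^2$ and using the monotonicity of $B_\lambda$ together with its Lipschitz character. In particular $|\dot u_\lambda(t)| = |B_\lambda u_\lambda(t)| \le |B_\lambda u^0|\le |B^0 u^0|$, so the family $\{u_\lambda\}$ is uniformly Lipschitz in $\lambda$.

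The central step, and the main obstacle, is showing that $\{u_\lambda\}$ is Cauchy in $C([0,T];H)$ as $\lambda\downarrow 0$. I would differentiate $|u_\lambda - u_\mu|^2$ to obtain
$$\tfrac{d}{dt}|u_\lambda - u_\mu|^2 = -2\langle B_\lambda u_\lambda - B_\mu u_\mu,\; u_\lambda - u_\mu\rangle,$$
and then use the identity $u_\lambda - u_\mu = \lambda B_\lambda u_\lambda + (J_\lambda u_\lambda - J_\mu u_\mu) - \mu B_\mu u_\mu$. The inner product of $B_\lambda u_\lambda - B_\mu u_\mu$ with the middle piece is nonnegative by monotonicity of $B$ applied to $B_\lambda u_\lambda \in B(J_\lambda u_\lambda)$ and $B_\mu u_\mu \in B(J_\mu u_\mu)$; the remaining two pieces are controlled by $(\lambda+\mu)|B^0 u^0|^2$ via Cauchy-Schwarz. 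Integrating yields an estimate of the form $|u_\lambda(t)-u_\mu(t)|^2 \le C(\lambda+\mu)\,t\,|B^0 u^0|^2$, giving the desired Cauchy property and a Lipschitz limit $u:[0,\infty)\to H$ on each compact interval. This is the part of the argument where maximal monotonicity truly enters, through the graph inclusion $B_\lambda x\in B(J_\lambda x)$.

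To conclude, I would identify the limit $u$ as a strong solution. The uniform bound $|\dot u_\lambda|\le |B^0 u^0|$ yields $\dot u_\lambda \rightharpoonup \dot u$ weakly in $L^2(0,T;H)$ along a subsequence, while $|J_\lambda u_\lambda - u_\lambda| = \lambda |B_\lambda u_\lambda|\to 0$ implies that $J_\lambda u_\lambda$ converges to $u$ uniformly on compact intervals. Demiclosedness of the graph of the maximal monotone operator $B$ (a standard consequence of Minty's lemma, stated in the $L^2$ setting or applied pointwise after extracting a further subsequence) then gives $-\dot u(t)\in Bu(t)$ for almost every $t\ge 0$ and $u(t)\in D(B)$ for every $t\ge 0$. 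Uniqueness is immediate: two solutions $u,v$ satisfy
$$\tfrac{d}{dt}|u(t)-v(t)|^2 = 2\langle \dot u - \dot v,\; u-v\rangle = -2\langle (-\dot u)-(-\dot v),\; u-v\rangle \le 0$$
by monotonicity, forcing $u\equiv v$ since they agree at $t=0$. Finally, the bound \eqref{DerivativeBoundYou} is inherited from $|\dot u_\lambda|\le |B^0 u^0|$ by lower semicontinuity of the norm under weak convergence.
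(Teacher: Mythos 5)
The paper does not prove this statement at all: it is quoted from the literature (Chapter II of Br\'ezis's monograph and Theorem 3.1 of Barbu's book are cited), so there is no in-paper argument to compare yours against. Your Yosida-regularization proof is the classical argument from those references, and it is correct in outline: the graph inclusion $B_\lambda x\in B(J_\lambda x)$, the decomposition $u_\lambda-u_\mu=\lambda B_\lambda u_\lambda+(J_\lambda u_\lambda-J_\mu u_\mu)-\mu B_\mu u_\mu$, and the resulting Cauchy estimate $|u_\lambda(t)-u_\mu(t)|^2\le C(\lambda+\mu)\,t\,|B^0u^0|^2$ are exactly the right ingredients, and you correctly identify the graph inclusion as the step where maximality is used. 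Two points would need slightly more care in a full write-up. First, the monotonicity of $t\mapsto|B_\lambda u_\lambda(t)|$ is cleanest not by differentiating $|B_\lambda u_\lambda(t)|^2$ (since $B_\lambda$ need not be differentiable), but by observing that monotonicity of $B_\lambda$ makes $t\mapsto|u_\lambda(t+h)-u_\lambda(t)|$ nonincreasing for each fixed $h>0$ and then dividing by $h$. Second, passing from ``$-\dot u(t)\in Bu(t)$ for a.e.\ $t$'' to ``$u(t)\in D(B)$ for \emph{every} $t$'' requires one more application of demiclosedness: for a fixed $t$, take $t_n\to t$ at which the inclusion holds, extract a weak limit of the bounded selections $-\dot u(t_n)\in Bu(t_n)$, and use the strong convergence $u(t_n)\to u(t)$. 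Both repairs are routine; the uniqueness argument and the derivation of the bound \eqref{DerivativeBoundYou} from the uniform Lipschitz estimate on the $u_\lambda$ are fine as stated.
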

\begin{rem}
The full statement of Br\'{e}zis's Theorem (Theorem 3.1 of \cite{MR755330}) is more extensive than what is written above. 
\end{rem}
\begin{rem}
We note that the right hand side of \eqref{DerivativeBoundYou} is finite as the images of maximal monotone operators are closed and convex subsets of $H$. 
\end{rem}

\par We also note that the operator $B$ generates a contraction. Let $u$ be a path as described in Br\'ezis' Theorem, and suppose $v: [0,\infty)\rightarrow H$ is any other Lipschitz continuous path with $v(t)\in D(B)$ for $t\ge 0$ and
$$
\dot v(t)+Bv(t)\ni 0, \; \text{a.e.}\; t\ge 0.
$$
Then 
\be\label{contractionProp}
|u(t)-v(t)|\le |u(0)-v(0)|, \quad t\ge 0.
\ee
If, in addition, there is some $\lambda>0$ such that 
\be\label{Bcoercive}
\langle p-q,x-y\rangle\ge \lambda |x-y|^2
\ee
for all $x,y\in H$, $p\in Bx$, and $q\in By$, then \eqref{contractionProp} can be improved to 
\be\label{ExpContraction}
|u(t)-v(t)|\le e^{-\lambda t} |u(0)-v(0)|, \quad t\ge 0.
\ee

\par Remarkably, it is also possible to use solutions of the initial value problem (as described in Br\'ezis's Theorem) to approximate equilibria of $B$. These are points $y\in H$ such that  
$$
By\ni 0.
$$
The following theorem was proved in \cite{MR394328}.
\begin{theorem}[The Baillon-Br\'{e}zis Theorem]
Suppose $B$ has an equilibrium point and $u$ is a solution of the initial value problem \eqref{abstractIVP}.  Then
$$
\frac{1}{t}\int^t_0u(s)ds
$$
converges weakly in $H$ to some equilibrium point of $B$.  
\end{theorem}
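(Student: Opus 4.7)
The plan is to follow an Opial-type argument exploiting the monotonicity of $B$. First, fix an equilibrium $x^*$, so $0\in Bx^*$, and pick $p(t)\in Bu(t)$ with $\dot u(t)=-p(t)$ almost everywhere. Monotonicity yields $\langle p(t),u(t)-x^*\rangle\ge 0$, hence $\tfrac{d}{dt}\tfrac{1}{2}|u(t)-x^*|^2\le 0$. Thus $t\mapsto |u(t)-x^*|$ is nonincreasing, so $u(\cdot)$ is bounded and the Ces\`aro mean $v(t):=\tfrac{1}{t}\int_0^t u(s)\,ds$ is bounded too; in particular, every sequence $t_n\to\infty$ admits a subsequence along which $v(t_n)$ converges weakly.

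Next, I would show that any weak cluster point $w$ of $v(t)$ is itself an equilibrium. For any $(y,q)$ in the graph of $B$, monotonicity gives
\[
\langle q-p(s), y-u(s)\rangle \ge 0,
\]
which rewrites as $\langle q, y-u(s)\rangle \ge \langle -\dot u(s), y-u(s)\rangle = \tfrac{d}{ds}\tfrac{1}{2}|y-u(s)|^2$. Integrating on $[0,t]$ and dividing by $t$ produces
\[
\langle q, y - v(t)\rangle \;\ge\; \frac{1}{2t}\bigl(|y-u(t)|^2 - |y-u(0)|^2\bigr).
\]
Since $|u(t)|$ is bounded, the right side tends to $0$ as $t\to\infty$, and along the subsequence $v(t_n)\rightharpoonup w$ we obtain $\langle q, y-w\rangle\ge 0$ for every $(y,q)\in\mathrm{graph}(B)$. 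Maximality of $B$ (applied to the candidate pair $(w,0)$) then forces $0\in Bw$.

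The main obstacle is the uniqueness of the weak cluster point. For this, observe that if $w_1,w_2$ are two such cluster points, both are equilibria, so $|u(t)-w_i|^2$ is monotonically decreasing and thus converges to some $L_i^2\ge 0$ as $t\to\infty$. Expanding
\[
|u(t)-w_1|^2 - |u(t)-w_2|^2 = 2\langle u(t), w_2-w_1\rangle + |w_1|^2 - |w_2|^2
\]
and averaging over $[0,t]$ shows that $\langle v(t), w_2-w_1\rangle$ has a limit as $t\to\infty$. Evaluating this common limit along subsequences on which $v\rightharpoonup w_1$ and $v\rightharpoonup w_2$ gives $\langle w_1,w_2-w_1\rangle = \langle w_2,w_2-w_1\rangle$, i.e. $|w_2-w_1|^2=0$. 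Hence all weak subsequential limits coincide, and $v(t)$ converges weakly to an equilibrium of $B$, as desired.
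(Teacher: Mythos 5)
The paper does not prove this theorem; it is quoted from Baillon--Br\'ezis \cite{MR394328}, so there is no in-paper argument to compare against. Your proof is correct and is essentially the classical one from that reference: monotonicity against the equilibrium gives that $t\mapsto|u(t)-x^*|$ is nonincreasing (hence boundedness and weak subsequential compactness of the Ces\`aro means), integrating $\langle q-(-\dot u(s)),\,y-u(s)\rangle\ge 0$ and invoking maximality shows every weak cluster point is an equilibrium, and the Opial-type computation with the two limits $L_1,L_2$ pins down uniqueness of the cluster point.
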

\begin{rem}\label{ExponentialConvRemark}
If $B$ satisfies \eqref{Bcoercive}, it is plain to see that $By\ni 0$ can have at most one solution. In this case, 
$$
|u(t)-y|\le e^{-\lambda t} |u(0)-y|, \quad t\ge 0
$$
for any solution of the initial value problem \eqref{abstractIVP}. This follows from \eqref{ExpContraction} as $v(t)=y$ is a solution of the initial value 
problem with $v(0)=y$. 
\end{rem}

\subsection{Proof of Theorem \ref{mainthm1}}
Let us now suppose the spaces $V$, $X_1,\dots, X_N$, and $H$ are as described in the subsection \ref{GenSettingSubsection} of the introduction.  We will further assume $f_1,\dots, f_N: X\rightarrow \R$ are given functions which satisfy \eqref{StandardAssumpf1thrufN}. An elementary but important observation we will use is that a given $y\in H$ induces a linear form in $V^*$ 
by the formula 
$$
(y,x):=\langle y,x\rangle, \quad (x\in V).
$$
That this linear form is continuous is due to the continuity of the embedding $V\subset H$. For convenience, we will use $\|\cdot \|$ to denote both the norm on $V$ and the associated norm on $V^N$. Namely, we'll write
$$
\|x\|=\left(\sum^N_{j=1}\|x_j\|^2\right)^{1/2}
$$
for $x=(x_1,\dots, x_N)\in V^N$. We will also use this convention for the inner product $\langle\cdot,\cdot\rangle$ and norm $|\cdot |$ on $H$ and on $H^N$ and the induced norm $\|\cdot \|_*$ on $V^*$ and on $(V^*)^N$.

\par Let us specify $A: H^N\rightarrow 2^{H^N}$ via
\be\label{ourOperator}
Ax=
\begin{cases}
\left(\partial_{x_1}f_1(x)+n_{X_1}(x_1)\right)\cap H\times \dots\times\left(\partial_{x_N}f_N(x)+n_{X_N}(x_N)\right)\cap H,& x\in{\cal D}\\
\emptyset, & x\not\in{\cal D}
\end{cases}
\ee
for $x\in H^N$. Here ${\cal D}$ is defined in \eqref{OurDomain}.
\begin{lem}
The domain of $A$ is ${\cal D}$, and $A$ is monotone. Moreover, an absolutely continuous
$u:[0,\infty)\rightarrow H^N$ solves
\be\label{OurabstractIVP}
\begin{cases}
\dot u(t)+Au(t)\ni 0, \; \text{a.e.}\; t\ge 0\\
u(0)=u^0
\end{cases}
\ee
if and only it solves \eqref{flowIVP}.
\end{lem}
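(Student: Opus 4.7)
The plan is to verify three separate assertions: (1) the identification $D(A)=\mathcal{D}$, (2) monotonicity of $A$, and (3) the equivalence between the abstract Cauchy problem \eqref{OurabstractIVP} and the differential variational inequality \eqref{flowIVP}. The first is essentially immediate: by the definition of $A$ in \eqref{ourOperator}, $Ax$ is explicitly set to $\emptyset$ off $\mathcal{D}$, while for $x\in\mathcal{D}$ the factor sets $(\partial_{x_j}f_j(x)+n_{X_j}(x_j))\cap H$ are all nonempty, so their product is nonempty in $H^N$.

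For monotonicity, I would take $x,y\in\mathcal{D}$ and arbitrary $p\in Ax$, $q\in Ay$ and write $p_j=\xi_j+\eta_j$, $q_j=\zeta_j+\mu_j$ with $\xi_j\in\partial_{x_j}f_j(x)$, $\zeta_j\in\partial_{x_j}f_j(y)$, $\eta_j\in n_{X_j}(x_j)$, $\mu_j\in n_{X_j}(y_j)$. Because $p_j,q_j\in H$, the inner product $\langle p_j-q_j,x_j-y_j\rangle$ coincides with the duality pairing $(p_j-q_j,x_j-y_j)$ under the identification $H\subset V^*$. Hence
\[
\langle p-q,x-y\rangle
=\sum_{j=1}^N(\xi_j-\zeta_j,x_j-y_j)+\sum_{j=1}^N(\eta_j-\mu_j,x_j-y_j).
\]
The first sum is nonnegative by the fourth clause of \eqref{StandardAssumpf1thrufN}, and the second is nonnegative term by term because $(\eta_j,y_j-x_j)\le 0$ and $(\mu_j,x_j-y_j)\le 0$ by the defining property \eqref{NormCone} of the normal cones.

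For the equivalence of the two initial value problems, the forward direction is direct: if $-\dot u(t)\in Au(t)$, choose a decomposition $-\dot u_j(t)=\xi_j(t)+\eta_j(t)$ with $\xi_j(t)\in\partial_{x_j}f_j(u(t))$ and $\eta_j(t)\in n_{X_j}(u_j(t))$; then for every $y\in X$ and each $j$,
\[
(\dot u_j(t)+\xi_j(t),y_j-u_j(t))=-(\eta_j(t),y_j-u_j(t))\ge 0,
\]
and summing over $j$ yields \eqref{flowIVP}. For the converse, suppose \eqref{flowIVP} holds with some selection $\xi_j(t)\in\partial_{x_j}f_j(u(t))$ witnessing the inequality. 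The crucial trick is to test the inequality with $y=(u_1(t),\ldots,u_{j-1}(t),y_j,u_{j+1}(t),\ldots,u_N(t))$ for arbitrary $y_j\in X_j$, which collapses all but the $j$th summand and gives
\[
(\dot u_j(t)+\xi_j(t),y_j-u_j(t))\ge 0\quad\text{for all }y_j\in X_j.
\]
Setting $\eta_j(t):=-\dot u_j(t)-\xi_j(t)$, this says exactly that $\eta_j(t)\in n_{X_j}(u_j(t))$. Since $\dot u_j(t)\in H$ almost everywhere, the identity $\xi_j(t)+\eta_j(t)=-\dot u_j(t)$ places $-\dot u_j(t)$ in $(\partial_{x_j}f_j(u(t))+n_{X_j}(u_j(t)))\cap H$, hence $u(t)\in\mathcal{D}$ and $-\dot u(t)\in Au(t)$.

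The main obstacle is a notational one rather than a mathematical one, namely parsing what \eqref{flowIVP} means when a subdifferential appears inside a pairing; once one fixes the convention that the inequality is asserted for some selection $\xi_j(t)\in\partial_{x_j}f_j(u(t))$, both directions reduce to the one-coordinate-at-a-time test that decouples the monotone part from the normal cone part. The only subtlety I would double-check is that the selections $\xi_j(t)$ can be chosen so that $t\mapsto\eta_j(t)=-\dot u_j(t)-\xi_j(t)$ is consistent with the almost-everywhere sense in \eqref{OurabstractIVP}; this is automatic since the defining inequality is pointwise in $t$ and no measurability of $\xi_j(t)$ beyond what is already implicit in \eqref{flowIVP} is needed.
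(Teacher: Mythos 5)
Your proposal is correct and follows essentially the same route as the paper: the domain identification is definitional, monotonicity comes from splitting each element of $Ax$ into a subgradient part (handled by the fourth clause of \eqref{StandardAssumpf1thrufN}) and a normal-cone part (handled by monotonicity of $n_{X_j}$), and the equivalence of the two initial value problems reduces to the inclusion $-\dot u_j(t)\in\partial_{x_j}f_j(u(t))+n_{X_j}(u_j(t))$. The only difference is one of detail: your one-coordinate-at-a-time test in the converse direction makes explicit a step the paper leaves implicit, and your reading of the pairing in \eqref{flowIVP} as holding for some selection from the subdifferential matches the convention the paper's argument tacitly uses.
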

\begin{proof}
By definition, the domain of $A$ is ${\cal D}$.  Suppose $x,y\in {\cal D}$, 
$$
\zeta_j\in\left(\partial_{x_j}f_j(x)+n_{X_j}(x_j)\right)\cap H,\; \text{and}\; \xi_j\in\left(\partial_{x_j}f_j(y)+n_{X_j}(y_j)\right)\cap H
$$
for $j=1,\dots, N$.  In view of the last condition listed in \eqref{StandardAssumpf1thrufN} and the fact that each $n_{X_j}$ is monotone,
\begin{align}
\sum^N_{j=1}\langle \zeta_j-\xi_j,x_j-y_j\rangle&=
\sum^N_{j=1}( \zeta_j-\xi_j,x_j-y_j)\ge 0.
\end{align}
Thus, $A$ is monotone. 
\par Note that if $u$ solves \eqref{OurabstractIVP},  then
$$
 -\dot u_j(t)\in ( \partial_{x_j}f_j(u(t))+n_{X_j}(u_j(t)))\cap H\subset \partial_{x_j}f_j(u(t))+n_{X_j}(u_j(t))
 $$
 for almost every $t\ge 0$ and each $j=1,\dots, N$. It now follows from the definition of the normal cone that $u$ is a solution of \eqref{flowIVP}.  Conversely, if $u$ solves \eqref{flowIVP} then 
 $$
  -\dot u_j(t)\in \partial_{x_j}f_j(u(t))+n_{X_j}(u_j(t))
 $$ 
for almost every $t\ge 0$ and each $j=1,\dots, N$. As $u$ is absolutely continuous, $-\dot u(t)\in H$ for almost every $t\ge 0$, 
and therefore,  $-\dot u(t)\in Au(t)$ for almost every $t\ge 0$.
\end{proof}
In view of Br\'ezis' Theorem and the Baillon-Br\'ezis Theorem, we can conclude Theorem \ref{mainthm1} once we verify that $A$ is maximal.  To this end, we will verify the hypotheses of Minty's Lemma.  
\begin{proof}[Proof of Theorem \ref{mainthm1}] 
For a given $y\in H^N$, it suffices to show there is $x\in X$ such that 
\be\label{SolvingxplusAxequaly}
\displaystyle\sum^N_{j=1}(x_j+\partial_{x_j}f_j(x)- y_j, z_j-x_j)\ge 0
\ee
for each $z\in X$.  In this case, $y_j-x_j\in H$ and
$$
y_j-x_j\in \partial_{x_j}f_j(x)+n_{X_j}(x_j)
$$
for $j=1,\dots, N$ so that $x\in {\cal D}$ and $y-x\in Ax$. 

\par In order to solve \eqref{SolvingxplusAxequaly}, we will employ the auxiliary functions defined by
$$
g_j(x):= \displaystyle\frac{1}{2}\|x_j\|^2-(y_j,x_j)+f_j(x)
$$
for $x\in X$ and $j=1,\dots, N$. In view of \eqref{StandardAssumpf1thrufN},
\be\label{StandardAssumph1thruhN}
\begin{cases}
g_1,\dots, g_N\;\text{are weakly lowersemicontinuous},
\\\\
X\ni y\mapsto \displaystyle\sum^N_{j=1}g_j(y_j,x_{-j})\;\text{is convex for each $\displaystyle x\in X$,}
\\\\
X\ni x\mapsto \displaystyle\sum^N_{j=1}g_j(y_j,x_{-j})\;\text{is weakly continuous for each $y\in X$, and}
\\\\
\displaystyle\sum^N_{j=1}(\partial_{x_j}g_j(x)-\partial_{x_j}g_j(y), x_j-y_j)\ge \|x-y\|^2\;\; \text{for each $x,y\in X$}.
\end{cases}
\ee
Note that $x\in X$ is a Nash equilibrium of $g_1,\dots, g_N$ if and only if $x$ solves \eqref{SolvingxplusAxequaly}. Since we are assuming that each $X_j$ has nonempty interior, this follows from the Pshenichnii--Rockafellar conditions for the minimum of a proper, lowersemicontinuous, convex function on a closed subset of a Banach space (Theorem 4.3.6 of \cite{MR2144010}). Consequently, we now aim to show that $g_1,\dots, g_N$ has a Nash equilibrium. 

\par  For each $r>0$, set 
$$
X^r:=\{x\in X: \|x\|\le r\}.
$$
It is clear that $X^r$ is convex and weakly closed.  And as each $X_1,\dots, X_N$ is nonempty, $X^r$ is nonempty for all $r$ greater or equal to some fixed $s>0$.  Let us consider the map 
$\Phi^r: X^r\rightarrow 2^{X^r}$ specified as
$$
\Phi^r(x):= \argmin_{y\in X^r}\left\{\sum^N_{j=1}g_j(y_j,x_{-j})\right\}
$$
$(x\in X^r)$ for $r>s$.

\par The first and second properties of $g_1,\dots, g_N$ listed in \eqref{StandardAssumph1thruhN} imply that $\Phi^r(x)\neq \emptyset$ and convex for any $x\in X^r$ and $r>s$; and the first and third properties imply that the graph of $\Phi^r$ is weakly closed. Indeed let us suppose $(x^k)_{k\in \N}, (y^k)_{k\in \N}\subset X$, $x^k\rightharpoonup x$, $y^k\rightharpoonup y$, and
$$
\sum^N_{j=1}g_j(y^k_j,x^k_{-j}) \le \sum^N_{j=1}g_j(z_j,x^k_{-j}) 
$$
for each $z\in X$ with $\|z\|\le r$ and all $k\in \N$. We can then send $k\rightarrow\infty$ to get 
$$
\sum^N_{j=1}g_j(y_j,x_{-j}) \le \sum^N_{j=1} g_j(z_j,x_{-j}).
$$

\par The Kakutani-Glicksberg-Fan theorem \cite{MR4776,MR46638,MR47317} then implies there is $x^r\in \Phi^r(x^r)$. In particular, 
\be\label{NashEqCondhjay}
\sum^N_{j=1}g_j(x^r)\le \sum^N_{j=1}g_j(z_j,x^r_{-j})
\ee
for each $z\in X$ with $\|z\|\le r$.  It follows that there are $p^r_j\in \partial_{x_j}g_j(x^r)$ for each $j=1,\dots, N$ such that 
$$
\sum^N_{j=1}(p^r_j, z_j-x^r_j)\ge 0
$$
for all $z\in X$ with $\|z\|\le r$. Since  $u^0\in {\cal D}$, there are
$$
w_j\in \partial_{x_j}g_j(u^0)
$$
for $j=1,\dots, N$.

\par Note that the fourth listed property of $g_1,\dots, g_N$ in \eqref{StandardAssumph1thruhN} gives 
\begin{align*}
\|x^r-u^0\|^2&\le \sum^N_{j=1}(p^r_j-w_j,x^r_j-u^0_j)\\
&\le \sum^N_{j=1}(-w_j,x^r_j-u^0_j)\\
&\le \|w\|_{*}\|x^r-u^0\|
\end{align*}
for all $r$ sufficiently large.  As a result, there is a sequence $(r_k)_{k\in\N}$ which increases to infinity such that $(x^{r_k})_{k\in \N}\subset X$ converges weakly to some $x\in X$. Upon sending $r=r_k\rightarrow\infty$ in \eqref{NashEqCondhjay}, we find that $x$ is the desired Nash equilibrium of $g_1,\dots, g_N$. 
\end{proof}
\begin{cor}
If $X$ is weakly compact, then $f_1,\dots, f_N$ has a Nash equilibrium.  
\end{cor}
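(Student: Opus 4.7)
The strategy is to adapt the fixed-point portion of the proof of Theorem \ref{mainthm1} and apply the Kakutani-Glicksberg-Fan theorem directly on $X$, without the truncation $X^r$ or the auxiliary functionals $g_j$ that were needed there to compensate for noncompactness. Since $X$ is now assumed to be weakly compact, convex, and nonempty, I would consider the set-valued map
$$
\Phi: X \to 2^X, \qquad \Phi(x) := \argmin_{y \in X}\left\{\sum_{j=1}^N f_j(y_j, x_{-j})\right\},
$$
verify that $\Phi$ satisfies the hypotheses of Kakutani-Glicksberg-Fan in the weak topology on $V^N$, and then unpack the resulting fixed point to obtain a Nash equilibrium.

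The first step is to show that $\Phi(x)$ is nonempty and convex for each $x\in X$. Nonemptyness follows because $y\mapsto \sum_j f_j(y_j,x_{-j})$ is weakly lower semicontinuous (first condition in \eqref{StandardAssumpf1thrufN}) on the weakly compact set $X$. Convexity of $\Phi(x)$ is immediate from the second condition in \eqref{StandardAssumpf1thrufN}. The second step is to establish that the graph of $\Phi$ is weakly closed; this is precisely the analogue of the graph-closedness argument for $\Phi^r$ in the proof of Theorem \ref{mainthm1}. Given sequences $x^k \rightharpoonup x$ and $y^k \rightharpoonup y$ in $X$ with $y^k \in \Phi(x^k)$, the inequality
$$
\sum_{j=1}^N f_j(y^k_j, x^k_{-j}) \le \sum_{j=1}^N f_j(z_j, x^k_{-j})
$$
holds for all $z\in X$. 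Taking $\liminf$ on the left via weak lower semicontinuity and passing to the limit on the right via the weak continuity stated in the third line of \eqref{StandardAssumpf1thrufN} yields $y\in \Phi(x)$.

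With these three hypotheses in place, the Kakutani-Glicksberg-Fan theorem produces a fixed point $x\in \Phi(x)$, i.e.,
$$
\sum_{j=1}^N f_j(x)\le \sum_{j=1}^N f_j(y_j,x_{-j})
$$
for every $y\in X$. Substituting $y=(z_k,x_{-k})$ for arbitrary $k\in\{1,\dots,N\}$ and arbitrary $z_k\in X_k$ collapses all but the $k$-th summand and gives $f_k(x)\le f_k(z_k,x_{-k})$, which is precisely the Nash condition.

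There is essentially no substantive obstacle here: weak compactness of $X$ removes the need for the coercivity estimate used in Theorem \ref{mainthm1}, where the quadratic term in $g_j$ produced the bound $\|x^r-u^0\|\le \|w\|_*$ that allowed extraction of a weakly convergent subsequence. The only subtlety worth flagging is that the fixed-point theorem is being applied in the weak topology on an infinite-dimensional reflexive space, so I would invoke the Fan-Glicksberg version \cite{MR46638,MR47317} already cited in the paper rather than the original Euclidean Kakutani theorem.
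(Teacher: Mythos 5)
Your proposal is correct and is essentially the paper's own proof: the paper's argument for this corollary is exactly to rerun the Kakutani--Glicksberg--Fan fixed-point step from the proof of Theorem \ref{mainthm1} with $X^r$ replaced by $X$ and $g_1,\dots,g_N$ replaced by $f_1,\dots,f_N$, which is what you have written out in detail. Your verification of nonemptiness, convexity, and weak closedness of the graph of $\Phi$, and the final substitution $y=(z_k,x_{-k})$ to extract the Nash condition, all match the intended argument.
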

\begin{proof}
If $X$ is weakly compact, we can repeat the argument given in the proof 
above used to show $g_1,\dots, g_N: X^r\rightarrow \R$ has a Nash equilibrium.  All we would need to do is to replace 
 $X^r$ with $X$  and $g_1,\dots, g_N$ with $f_1,\dots, f_N$. 
\end{proof}
\begin{cor}\label{ExistenceNashCoercive}
Suppose there is $\theta>0$ such that
\be\label{f1throughfNcoercive}
\displaystyle\sum^N_{j=1}(\partial_{x_j}f_j(x)-\partial_{x_j}f_j(y), x_j-y_j)\ge \theta\|x-y\|^2
\ee
for each $x,y\in X$. Then $f_1,\dots, f_N$ has a unique Nash equilibrium $z\in X$.  Moreover,  if $u: [0,\infty)\rightarrow H^N$ is a solution of the initial value problem as described in Theorem \ref{mainthm1}, there is $\lambda>0$ such that 
\be\label{ExpConvtoNash}
|u(t)-z|\le e^{-\lambda t}|u(0)-z|
\ee
for each $t\ge 0$.
\end{cor}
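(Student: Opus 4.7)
The plan is to verify three things in sequence: existence of a Nash equilibrium $z$, its uniqueness, and the exponential estimate \eqref{ExpConvtoNash}, all in the setting of Theorem \ref{mainthm1}. The bulk of the work is in the existence part; uniqueness and exponential decay are essentially one-liners.

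For existence, the approach is to repeat the Kakutani-Glicksberg-Fan fixed-point construction from the proof of Theorem \ref{mainthm1}, but applied directly to $f_1,\dots, f_N$ instead of to the auxiliary perturbations $g_1,\dots, g_N$. The coercivity \eqref{f1throughfNcoercive} plays the role that the quadratic perturbation $\frac{1}{2}\|x_j\|^2-(y_j,x_j)$ played there. Concretely, for each large $r$ the weakly compact set $X^r := \{x \in X : \|x\| \leq r\}$ supports a fixed point $x^r \in \Phi^r(x^r)$ of
$$
\Phi^r(x) := \argmin_{y \in X^r}\sum^N_{j=1} f_j(y_j, x_{-j}),
$$
and the Pshenichnii-Rockafellar conditions furnish $p^r_j \in \partial_{x_j} f_j(x^r)$ with $\sum_j (p^r_j, z_j - x^r_j) \geq 0$ for all $z \in X^r$. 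Fix any $u^0 \in {\cal D}$; since $\partial_{x_j} f_j(u^0) + n_{X_j}(u^0_j) \neq \emptyset$ forces $\partial_{x_j} f_j(u^0) \neq \emptyset$, we may select some $w_j \in \partial_{x_j} f_j(u^0)$. Setting $z = u^0$ in the variational inequality (valid once $r$ is large enough that $u^0 \in X^r$) and combining with \eqref{f1throughfNcoercive} yields
$$
\theta \|x^r - u^0\|^2 \leq \sum^N_{j=1}(p^r_j - w_j, x^r_j - u^0_j) \leq \|w\|_*\|x^r - u^0\|,
$$
bounding $\|x^r\|$ uniformly in $r$. Weak compactness delivers a subsequential limit $x^{r_k} \rightharpoonup z$, and the weak closedness of the Nash equilibrium condition (established inside the proof of Theorem \ref{mainthm1}) certifies that $z$ is a Nash equilibrium of $f_1,\dots, f_N$.

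Uniqueness follows directly from \eqref{f1throughfNcoercive}: if $z, z'$ are both Nash equilibria, then selecting $p_j \in \partial_{x_j} f_j(z)$ and $p'_j \in \partial_{x_j} f_j(z')$ realizing $(p_j, z'_j - z_j) \geq 0$ and $(p'_j, z_j - z'_j) \geq 0$ gives $\sum_j(p_j - p'_j, z_j - z'_j) \leq 0$, forcing $\|z - z'\| = 0$. For the exponential estimate, note that the Nash equilibrium $z$ satisfies $0 \in \partial_{x_j} f_j(z) + n_{X_j}(z_j)$ for each $j$ and $0 \in H$, so $z \in {\cal D}$ and $0 \in Az$; hence the constant path $v(t) \equiv z$ is a stationary solution of \eqref{OurabstractIVP}. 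The continuous embedding $V \subset H$ furnishes some $C > 0$ with $|v| \leq C\|v\|$ for $v \in V$, so \eqref{f1throughfNcoercive} upgrades to
$$
\langle p - q, x - y\rangle \geq \theta \|x - y\|^2 \geq (\theta/C^2)|x - y|^2
$$
for all $x, y \in {\cal D}$, $p \in Ax$, $q \in Ay$; applying \eqref{ExpContraction} with $\lambda := \theta/C^2$ to the solution $u$ and the stationary solution $v \equiv z$ delivers \eqref{ExpConvtoNash}.

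The main obstacle is the bookkeeping in the existence step: the strong monotonicity hypothesis and the Pshenichnii-Rockafellar subgradient selections must be interfaced carefully enough to extract the uniform bound on $\|x^r\|$. Everything else reduces to a direct application of the already-established Br\'ezis-Baillon theory, packaged through Remark \ref{ExponentialConvRemark}.
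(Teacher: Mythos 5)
Your proposal is correct and follows essentially the same route as the paper: existence by rerunning the Kakutani--Glicksberg--Fan truncation argument from the proof of Theorem \ref{mainthm1} with the coercivity \eqref{f1throughfNcoercive} supplying the uniform bound on $\|x^r\|$, uniqueness by plugging each equilibrium's variational inequality into the other and adding, and the exponential estimate by upgrading the monotonicity of $A$ to strong monotonicity in $H$ via the continuous embedding $V\subset H$ and invoking the contraction estimate \eqref{ExpContraction} with the stationary solution $v\equiv z$ (Remark \ref{ExponentialConvRemark}). Your write-up is in fact slightly more explicit than the paper's, e.g.\ in checking that $z\in{\cal D}$ with $0\in Az$.
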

\begin{proof}
We can repeat the argument given in the proof of Theorem \ref{mainthm1} that shows $g_1,\dots, g_N$ has a Nash equilibrium to conclude that $f_1,\dots, f_N$ has a Nash equilibrium. 

\par If $x$ is a Nash equilibrium of $f_1,\dots, f_N$, then
$$
\sum^N_{j=1}(\partial_{x_j}f_j(x), z_j-x_j)\ge0
$$
for each $z\in X$. Likewise, 
$$
\sum^N_{j=1}(\partial_{x_j}f_j(y), w_j-y_j)\ge0
$$
for each $w\in X$ if $y$ is another Nash equilibrium. Choosing $z=y$ and $w=x$ and adding these inequalities give
$$
 \theta\|x-y\|^2\le \sum^N_{j=1}(\partial_{x_j}f_j(x)-\partial_{x_j}f_j(y), x_j-y_j)\le0.
$$

\par As $V^N\subset H^N$ is continuously embedded, there is a constant $C>0$ such that 
$$
|x|\le C\|x\|, \quad x\in V^N.
$$
Setting
$$
\lambda:=\frac{\theta}{C^2},
$$
gives
$$
\langle Ax-Ay,x-y\rangle\ge \theta\|x-y\|^2\ge \lambda|x-y|^2 
$$
for $x,y\in D(A)$. The inequality \eqref{ExpConvtoNash} now follows from Remark \ref{ExponentialConvRemark}.
\end{proof}
We now consider the special case mentioned in the introduction.  In the statement below, we will suppose that $V$ and $H$ are as above.  However, we will not assume $f_1,\dots, f_N$ satisfy \eqref{StandardAssumpf1thrufN}. We also note that the proof essentially follows from an observation made by Br\'ezis in Remark 2.3.7 of \cite{MR0348562}.  
\begin{thm}\label{mainthm2}
Suppose $f_1,\dots, f_N: V^N\rightarrow \R$ satisfy
\be\label{ThirdAssumpf1thrufN}
\begin{cases}
\partial_{x_j}f_j(x)=\{\nabla_{x_j}f_j(x)\}\;\text{for each $x\in V^N$,}
\\\\
\nabla_{x_j}f_j(x+ty)\rightharpoonup\nabla_{x_j}f_j(x)\;\text{as $t\rightarrow0^+$ for each $x,y\in V^N$, and}
\\\\
\displaystyle\sum^N_{j=1}(\nabla_{x_j}f_j(x)-\nabla_{x_j}f_j(y), x_j-y_j)\ge 0\;\; \text{for each $x,y\in V^N$},
\end{cases}
\ee
and define 
$$
{\cal D}=\{x\in V^N: \nabla_{x_1}f_1(x),\dots, \nabla_{x_N}f_N(x)\in H\}.
$$
Then for each $u^0\in {\cal D}$, there is a unique Lipschitz continuous $u:[0,\infty)\rightarrow H^N$ such that 
$u(t)\in {\cal D}$ for each $t\ge 0$ and 
\be\label{flowIVP3}
\begin{cases}
u(0)=u^0,\;\text{and}
\\\\
 \dot u_j(t)+ \nabla_{x_j}f_j(u(t))= 0\; \text{for a.e. $t\ge 0$ and each $j=1,\dots, N$}.
\end{cases}
\ee
\end{thm}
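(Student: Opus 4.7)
The plan is to recast \eqref{flowIVP3} as the abstract IVP \eqref{abstractIVP} for a maximal monotone operator on $H^N$, and then apply Br\'ezis's Theorem. To this end, I would define $A:H^N\to 2^{H^N}$ with $D(A)={\cal D}$ by
$$A(x)=\bigl(\nabla_{x_1}f_1(x),\dots,\nabla_{x_N}f_N(x)\bigr),\quad x\in {\cal D}.$$
The third hypothesis in \eqref{ThirdAssumpf1thrufN} shows $A$ is monotone on $H^N$, and the equivalence of absolutely continuous solutions of \eqref{flowIVP3} with those of \eqref{abstractIVP} for this $A$ is immediate from the definitions. The real content of the theorem is thus the maximality of $A$ on $H^N$.

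By Minty's Lemma, it suffices to show that for each $y\in H^N$ there is $x\in {\cal D}$ with $x+\nabla f(x)=y$ in $H^N$. Writing $\iota$ for the canonical embedding $V^N\hookrightarrow H^N\hookrightarrow (V^*)^N$, this amounts to finding $x\in V^N$ with $\iota(x)+\nabla f(x)=\iota(y)$ in $(V^*)^N$ and $\nabla f(x)\in H^N$. The map $T:V^N\to (V^*)^N$ given by $T(x)=\iota(x)+\nabla f(x)$ is monotone, hemicontinuous, and everywhere-defined on the reflexive Banach space $V^N$ — these are exactly the three hypotheses in \eqref{ThirdAssumpf1thrufN} together with the continuity of $\iota$. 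This places us in the setting of the Browder--Minty theorem.

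To actually invoke Browder--Minty, I would regularize by the duality map: set $T_\epsilon=\epsilon J+T$ with $J:V^N\to (V^*)^N$ the normalized duality map of $V^N$. Then $T_\epsilon$ is monotone, hemicontinuous, and $V^N$-coercive, so Browder--Minty produces $x_\epsilon\in V^N$ with $T_\epsilon(x_\epsilon)=\iota(y)$. Pairing this equation against $x_\epsilon-w$ for any fixed $w\in {\cal D}$ and exploiting the monotonicity of $\nabla f$ yields bounds $|x_\epsilon|\le C$ and $\epsilon\|x_\epsilon\|^2\le C$, so $\epsilon J(x_\epsilon)\to 0$ in $(V^*)^N$. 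Extracting a weakly $H^N$-convergent subsequence $x_\epsilon\rightharpoonup x$ and running Minty's trick against arbitrary $v\in V^N$ — then specializing to $v=x-tz$ with $z\in V^N$ and $t\downarrow 0$, and invoking the hemicontinuity hypothesis — identifies $\nabla f(x)=\iota(y)-\iota(x)\in H^N$, so $x\in {\cal D}$ and $T(x)=\iota(y)$. With maximality of $A$ settled, Br\'ezis's Theorem delivers the unique Lipschitz solution of \eqref{flowIVP3}. The principal obstacle is precisely this limit $\epsilon\to 0$: since $\|x_\epsilon\|$ is not a priori bounded, the $V^N$-regularity of the limit $x$ must be extracted from the equation itself via the Minty argument rather than from direct compactness in $V^N$.
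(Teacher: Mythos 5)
Your overall architecture coincides with the paper's: the same single-valued operator $A$ on $H^N$ with domain ${\cal D}$, the reduction of the whole theorem to maximality of $A$ via Minty's Lemma and Br\'ezis's Theorem, and the auxiliary operator $Bx=\iota(x)+\nabla f(x)$ from $V^N$ to $(V^*)^N$ handled by Browder--Minty. The divergence is in how Browder--Minty is invoked, and this is where your argument has a genuine gap. The paper applies the surjectivity theorem \emph{directly} to $B$, using the strong monotonicity supplied by the identity term, $(Bx-By,x-y)\ge\|x-y\|^2$, as the coercivity hypothesis; it then obtains $x\in V^N$ with $Bx=\iota(y)$ in one step and reads off $\nabla_{x_j}f_j(x)=y_j-x_j\in H$, hence $x\in{\cal D}$. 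There is no regularization and no limit passage. (Your implicit worry about coercivity is not frivolous --- with the paper's identification of $H$ inside $V^*$, the identity term contributes $|x-y|^2$ in the $H$-norm, and one should compare this with the $V$-norm appearing in the paper's display --- but your proposed repair does not close the gap, as follows.)

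The limit $\epsilon\to 0$ cannot be completed as you describe. Your a priori bounds give only $|x_\epsilon|\le C$ and $\epsilon\|x_\epsilon\|^2\le C$, so the subsequential limit $x$ is a weak limit in $H^N$ and there is no reason for $x$ to belong to $V^N$ at all. Minty's trick then asks you to pass to the limit in $(\epsilon Jv+Tv-\iota(y),\,v-x_\epsilon)\ge 0$ for arbitrary $v\in V^N$; but $Tv-\iota(y)$ contains $\nabla f(v)$, which in general lies in $(V^*)^N\setminus H^N$, so this pairing is a genuine $V^*$--$V$ duality: weak $H^N$ convergence of $x_\epsilon$ does not make it converge, and the putative limit $(Tv-\iota(y),v-x)$ is not even defined when $x\notin V^N$. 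Restricting the test elements to $v\in{\cal D}$ (so that all pairings become $H$-inner products) does let you pass to the limit, but the resulting inequality $\langle v+\nabla f(v)-y,\,v-x\rangle\ge 0$ for all $v\in{\cal D}$ identifies $y-x$ as a value of $\iota+A$ at $x$ only if $\iota+A$ is already known to be maximal --- which is exactly what you are trying to prove. You name this as ``the principal obstacle'' but supply no mechanism to overcome it; the mechanism in the paper is precisely the strong monotonicity of $B$, which makes the regularization unnecessary. Either justify coercivity of $B$ on $V^N$ and apply Browder--Minty directly, as the paper does, or find a genuinely different route; the $\epsilon J$ detour as written does not terminate.
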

\begin{proof}
It suffices to show that $A: H^N\rightarrow 2^{H^N}$ defined via
\be\label{SpecialOperator}
Ax:=
\begin{cases}
\{(\nabla_{x_1}f_1(x),\dots,\nabla_{x_N}f_N(x))\},& x\in{\cal D}\\
\emptyset, & x\not\in{\cal D}
\end{cases}
\ee
($x\in H^N$) is maximal.  To this end, we first consider the operator $B: V^N\rightarrow (V^*)^N$ defined via
$$
Bx:=(x_1+\nabla_{x_1}f_1(x),\dots,x_N+\nabla_{x_N}f_N(x))
$$
for $x\in V^N$.  Notice that 
$$
B(x+ty)\rightharpoonup Bx
$$
and 
$$
(Bx-By,x-y)\ge \|x-y\|^2
$$
for all $x,y\in V^N$. 

\par By a theorem due independently to Browder \cite{MR156116} and Minty \cite{MR162159}, $B$ is surjective; see also Corollary 1.8 of \cite{MR1786735}.  In particular, if $y\in H^N$, there is $x\in V^N$ such that  
$$
Bx=y.
$$
That is, 
$$
x_j+\nabla_{x_j}f_j(x)=y_j
$$
for $j=1,\dots, N$.  Then $\nabla_{x_j}f_j(x)=y_j-x_j\in H$ for $j=1,\dots, N$, which implies $x\in {\cal D}$ and 
$$
x+Ax\ni y.
$$
\end{proof}
\begin{rem}\label{BrowderMintyRemark}
If $f_1,\dots, f_N$ additionally satisfy 
$$
\sum^N_{j=1}(\nabla_{x_j}f_j(x)-\nabla_{x_j}f_j(y), x_j-y_j)\ge \theta\|x-y\|^2
$$
($x,y\in V^N$), then $f_1,\dots, f_N$ has a unique Nash equilibrium and \eqref{ExpConvtoNash} holds for any 
$u(0)\in {\cal D}$.
\end{rem}

\section{Finite dimensional flows}\label{FiniteDimSect}
In this section, we will study a few implications of Theorem \ref{mainthm1} in the particular case 
$$
V=H=\R^d
$$
equipped with the standard Euclidean dot product and norm.  As there is just one topology to consider, the statements we'll make will be simpler than in the infinite dimensional setting. We will also identify a potential application to noncooperative games. 

\subsection{Compact domains}
Let $X=X_1\times\cdots\times X_N$, where $X_1,\dots, X_N\subset \R^d$ are convex, compact subsets with nonempty interior.  Further assume
\be
\begin{cases}
f_1,\dots, f_N: X\rightarrow \R \;\text{are continuous},
\\\\
X\ni y\mapsto \displaystyle\sum^N_{j=1}f_j(y_j,x_{-j})\;\text{is convex for each $\displaystyle x\in X$, and}
\\\\
\displaystyle\sum^N_{j=1}(\partial_{x_j}f_j(x)-\partial_{x_j}f_j(y))\cdot (x_j-y_j)\ge 0\;\; \text{for each $x,y\in X$.}
\end{cases}
\ee
As before we set 
$$
{\cal D}=\left\{x\in X: \bigcap^N_{j=1} (\partial_{x_j}f_j(x)+n_{X_j}(x))\neq \emptyset\right\},
$$
where
$$
n_{X_j}(z)=\{\zeta\in \R^d: \zeta\cdot(y-z)\le 0\;\text{for all $y\in X_j$}\}
$$
$(j=1,\dots, N$).  Theorem \ref{mainthm1} implies the following.   
\begin{prop}\label{CompactDomainFiniteD}
For each $u^0\in {\cal D}$, there  is a unique Lipschitz continuous $u: [0,\infty)\rightarrow \R^{Nd}$ with $u(t)\in {\cal D}$ for $t\ge 0$ and which satisfies 
\be
\begin{cases}
u(0)=u^0,\;\text{and}
\\\\
 \displaystyle\sum^N_{j=1}(\dot u_j(t)+ \partial_{x_j}f_j(u(t)))\cdot ( y_j-u_j(t))\ge 0\; \text{for a.e. $t\ge 0$ and each $y\in X$}.
\end{cases}
\ee
Furthermore, the limit
$$
\lim_{t\rightarrow\infty}\frac{1}{t}\int^t_0u(s)ds
$$
exists and equals a Nash equilibrium of $f_1,\dots, f_N$. 
\end{prop}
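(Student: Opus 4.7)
The strategy is to recognize Proposition \ref{CompactDomainFiniteD} as a direct specialization of Theorem \ref{mainthm1} to the finite-dimensional setting $V = H = V^* = \R^d$ equipped with the Euclidean inner product, and then invoke the existence corollary together with Theorem \ref{mainthm1}(ii) for the asymptotic statement.

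First I would verify that the standing assumption \eqref{StandardAssumpf1thrufN} holds. In finite dimensions the weak and norm topologies coincide on bounded sets, so the assumed continuity of each $f_j$ on the compact set $X$ automatically gives the first and third bullets of \eqref{StandardAssumpf1thrufN} (weak lower semicontinuity of each $f_j$ and weak continuity of the partial sums $x \mapsto \sum_j f_j(y_j,x_{-j})$). The second bullet (convexity of $y \mapsto \sum_j f_j(y_j,x_{-j})$) and the fourth bullet (monotonicity of $\sum_j \partial_{x_j} f_j$) are assumed word for word. Moreover, the abstract dual pairing $(\cdot,\cdot)$ appearing in \eqref{flowIVP} coincides here with the Euclidean dot product, so the variational differential inequality written in the proposition is literally the one in \eqref{flowIVP}.

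Next I would apply Theorem \ref{mainthm1}(i) to obtain, for every $u^0 \in {\cal D}$, a unique Lipschitz continuous solution $u:[0,\infty) \to \R^{Nd}$ with $u(t) \in {\cal D}$ for all $t \ge 0$. For the convergence statement, I would argue as follows: since each $X_j \subset \R^d$ is compact, $X$ is weakly compact, so the first corollary following Theorem \ref{mainthm1} guarantees the existence of a Nash equilibrium of $f_1, \dots, f_N$. With this in hand, Theorem \ref{mainthm1}(ii) asserts that the Cesàro mean $\frac{1}{t} \int_0^t u(s)\, ds$ converges weakly in $H^N = \R^{Nd}$ to some Nash equilibrium. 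Because weak convergence coincides with norm convergence in finite dimensions, the limit exists in the ordinary sense, giving the last assertion of the proposition.

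There is no serious obstacle here; the proof is essentially a translation of notation. The only point worth being careful about is the identification of topologies and pairings: verifying that in finite dimensions weak equals strong, that the Riesz identification makes $V = H = V^*$ consistent with the embedding structure assumed in Section \ref{GenSettingSubsection}, and that the normal cone $n_{X_j}$ and the set ${\cal D}$ defined in the proposition agree with the objects in \eqref{NormCone} and \eqref{OurDomain}. Once these identifications are made explicit, both conclusions drop out of Theorem \ref{mainthm1} and its compactness corollary.
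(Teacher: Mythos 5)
Your proposal is correct and matches the paper's intent exactly: the paper gives no separate proof for this proposition, stating only that ``Theorem \ref{mainthm1} implies the following,'' and your argument (verify \eqref{StandardAssumpf1thrufN} via the coincidence of weak and norm topologies in finite dimensions, apply Theorem \ref{mainthm1}(i) for well-posedness, use the weak-compactness corollary for existence of an equilibrium, and conclude with Theorem \ref{mainthm1}(ii)) is precisely the intended specialization. Nothing further is needed.
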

\begin{rem}
In view of Corollary \ref{ExistenceNashCoercive}, if 
$$
\displaystyle\sum^N_{j=1}(\partial_{x_j}f_j(x)-\partial_{x_j}f_j(y))\cdot( x_j-y_j)\ge \theta|x-y|^2
$$
for $x,y,\in X$, $f_1,\dots, f_N$ has a unique Nash equilibrium and the solution $u$ in the proposition 
above converges exponentially fast to this equilibrium point. 
\end{rem}

\subsection{An application to game theory}
Again suppose $X_1,\dots, X_N\subset \R^d$ are convex, compact subsets with nonempty interior.
Another interesting case to consider is when $f_1,\dots, f_N: X\rightarrow \R$ are each $N$-linear.  That is, 
\be\label{NlinearCond}
X_j\ni y_j\mapsto f_j(y_j,x_{-j})\;\text{is linear}
\ee
for each $j=1,\dots, N$ and $x\in X$.  These are precisely the types of 
functions Nash considered in his celebrated work on noncooperative games \cite{MR43432,MR31701}. 
Note that \eqref{NlinearCond} implies 
\be\label{NlinearCondDer}
y_j\cdot \nabla_{x_j}f_j(x)=f_j(y_j,x_{-j})
\ee
for $y_j\in X_j$, $x\in \R^{Nd}$, and $j=1,\dots, N$.  Consequently, 
\begin{align}
\sum^{N}_{j=1}\left(\nabla_{x_j}f_j(x)-\nabla_{x_j}f_j(y)\right)\cdot (x_j-y_j)=
\sum^{N}_{j=1}(f_j(x)-f_j(y_j,x_{-j})+f_j(y)-f_j(x_j,y_{-j}) )\ge 0
\end{align}
if and only if 
\be\label{CostCondition}
\sum^{N}_{j=1}(f_j(x)+f_j(y))\ge \sum^{N}_{j=1}(f_j(y_j,x_{-j})+f_j(x_j,y_{-j})).
\ee
With these assumptions, Theorem \ref{mainthm1} implies the following statement.

\begin{prop}
Suppose $f_1,\dots, f_N$ are $N$-linear and satisfy \eqref{CostCondition}. Then for each $u^0\in X$, 
there is a unique Lipschitz $u:[0,\infty)\rightarrow X$ such that 
\be\label{GameTheoryIVP}
\begin{cases}
u(0) = u^0\\\\
\displaystyle\sum^N_{j=1}(\dot u_j(t) +\nabla_{x_j}f_j(u(t)))\cdot (y_j- u_j(t))\ge 0,\;\text{for a.e. $t\ge 0$ and all $y\in X$.}
\end{cases}
\ee
Moreover, 
$$
\frac{1}{t}\int^t_0u(s)ds
$$
converges as $t\rightarrow\infty$ to a Nash equilibrium of $f_1,\dots, f_N$. 
\end{prop}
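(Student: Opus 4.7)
The plan is to derive this statement as a direct application of Proposition \ref{CompactDomainFiniteD} (equivalently, Theorem \ref{mainthm1}) to the functions $f_1,\dots,f_N$, after checking that the required hypotheses are satisfied and identifying the domain $\mathcal{D}$.

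First I would verify the structural assumptions of Proposition \ref{CompactDomainFiniteD}. $N$-linearity forces each $f_j$ to be a polynomial in the coordinates of $x$, hence continuous on $X$; moreover, for fixed $x$, each map $y_j\mapsto f_j(y_j,x_{-j})$ is linear, so $y\mapsto \sum_{j=1}^N f_j(y_j,x_{-j})$ is a sum of functions each depending on a single block $y_j$ and linear in that block, which makes it convex in $y\in X$. For the monotonicity hypothesis, I would invoke the identity derived in the excerpt just prior to the statement: using \eqref{NlinearCondDer}, one has
\begin{equation}
\sum_{j=1}^{N}\bigl(\nabla_{x_j}f_j(x)-\nabla_{x_j}f_j(y)\bigr)\cdot (x_j-y_j)=\sum_{j=1}^N\bigl(f_j(x)+f_j(y)-f_j(y_j,x_{-j})-f_j(x_j,y_{-j})\bigr),
\end{equation}
so the assumed inequality \eqref{CostCondition} is exactly the monotonicity condition \eqref{f1thrufNmonotone}. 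All three hypotheses of Proposition \ref{CompactDomainFiniteD} therefore hold.

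Next I would identify the domain. Since $N$-linearity makes each $f_j$ smooth, $\partial_{x_j}f_j(x)=\{\nabla_{x_j}f_j(x)\}$ is single valued, and because $0\in n_{X_j}(x_j)$ for every $x_j\in X_j$, the set $\nabla_{x_j}f_j(x)+n_{X_j}(x_j)$ is nonempty at every $x\in X$. Hence $\mathcal{D}=X$, which is why the statement allows \emph{any} $u^0\in X$ as initial data, not merely $u^0\in\mathcal{D}$. Proposition \ref{CompactDomainFiniteD} then yields the unique Lipschitz solution $u:[0,\infty)\to X$ of \eqref{GameTheoryIVP}.

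For the Cesàro-mean assertion, I would invoke the corollary appearing just after Theorem \ref{mainthm1}: since $X\subset\R^{Nd}$ is compact (hence weakly compact) and the remaining hypotheses are in force, $f_1,\dots,f_N$ admit a Nash equilibrium. Proposition \ref{CompactDomainFiniteD} then delivers the convergence of $\frac{1}{t}\int_0^t u(s)\,ds$ to such an equilibrium as $t\to\infty$. There is no serious obstacle here; the only point that requires a brief calculation is the algebraic identity relating \eqref{CostCondition} to the monotonicity condition, and that calculation is already exhibited in the excerpt immediately preceding the proposition.
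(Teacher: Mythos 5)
Your proposal is correct and follows essentially the same route as the paper, which likewise derives this proposition as a direct consequence of Theorem \ref{mainthm1} after observing (via \eqref{NlinearCondDer}) that \eqref{CostCondition} is equivalent to the monotonicity condition \eqref{f1thrufNmonotone}; the paper leaves the remaining verifications implicit. Your additional checks --- continuity and blockwise convexity from multilinearity, the identification $\mathcal{D}=X$ via $0\in n_{X_j}(x_j)$, and existence of an equilibrium from compactness of $X$ --- are exactly the details the paper is tacitly relying on.
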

\begin{rem}
Using \eqref{NlinearCondDer}, we can rewrite the inequality in \eqref{GameTheoryIVP} as 
$$
\sum^N_{j=1}(\dot u_j(t)\cdot (y_j- u_j(t))+f_j(y_j,u_{-j}(t))-f_j(u(t)))\ge 0.
$$
\end{rem}

\subsection{The whole space}
Finally, we consider the case where $f_1,\dots,f_N$ are defined on the whole space $\R^{Nd}$. We will assume 
\be
\begin{cases}
f_1,\dots, f_N \text{ are continuous,}
\\\\ 
\displaystyle\R^{Nd}\ni y\mapsto \sum^N_{j=1}f_j(y_j,x_{-j}) \text{ is convex for each $x\in \R^{Nd}$, and}
\\\\
\displaystyle\sum^N_{j=1}(\partial_{x_j}f_j(x)-\partial_{x_j}f_j(y))\cdot (x_j-y_j) \ge 0 \text{ for each $x,y\in \R^{Nd}$}.
\end{cases}
\ee
Let us also set 
$$
{\cal D}:=\left\{x\in \R^{Nd}: \bigcap^N_{j=1}\partial_{x_j}f_j(x)\neq \emptyset \right\}.
$$
Theorem \ref{mainthm1} gives us the following proposition. 

\begin{prop}\label{FiniteDWholeSpaceProp}
For each $u^0\in{\cal D}$, there is a unique Lipschitz continuous $u: [0,\infty)\rightarrow \R^{Nd}$ such that $u(t)\in {\cal D}$ for each $t\ge 0$ and which satisfies 
\be
\begin{cases}
u(0) = u^0
\\\\
\;\dot u_j(t) +\partial_{x_j}f_j(u(t))\ni 0,\;\text{for a.e. $t\ge 0$ and $j=1,\dots, N$.}
\end{cases}
\ee
If $f_1,\dots, f_N$ has a
Nash equilibrium, then
$$
\lim_{t\rightarrow\infty}\frac{1}{t}\int^t_0u(s)ds
$$
exists and is also Nash equilibrium.
\end{prop}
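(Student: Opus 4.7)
The plan is to apply Theorem \ref{mainthm1} directly, taking $V = H = \R^d$ so that $V^N = H^N = \R^{Nd}$, and taking $X_1 = \cdots = X_N = \R^d$. The bulk of the work is simply to verify that the hypotheses of that theorem are satisfied and that the setup of Proposition \ref{FiniteDWholeSpaceProp} matches the reduction described in the introduction after the statement of Theorem \ref{mainthm1}.

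First I would note that $\R^d$ is a reflexive Banach space which coincides with its own dual and which may be taken as its own Hilbert space, so the ambient triple $V \subset H \subset V^*$ is in place. Each $X_j = \R^d$ is trivially closed, convex, and has nonempty interior. Next I would verify the four conditions in \eqref{StandardAssumpf1thrufN}. Because the weak and strong topologies on $\R^{Nd}$ coincide, the continuity of $f_1,\dots,f_N$ immediately yields weak lower semicontinuity of each $f_j$ as well as weak continuity of $x \mapsto \sum_{j=1}^N f_j(y_j,x_{-j})$ for each fixed $y$. The convexity condition and the monotonicity condition are assumed directly in the hypotheses of the proposition.

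Once the hypotheses are confirmed, the specialization $X_j = V$ makes $n_{X_j}(x_j) \equiv \{0\}$, so the set $\mathcal{D}$ from Theorem \ref{mainthm1} collapses to the set defined in the proposition, and the variational inequality in \eqref{flowIVP} collapses to the differential inclusion displayed in the proposition, exactly as described in the paragraph surrounding \eqref{flowIVP2}. Part $(i)$ of Theorem \ref{mainthm1} then gives the existence of a unique Lipschitz $u : [0,\infty) \to \R^{Nd}$ with $u(t) \in \mathcal{D}$ for all $t \ge 0$ satisfying the inclusion. For the second conclusion, the assumption that $f_1,\dots,f_N$ admits a Nash equilibrium allows us to invoke part $(ii)$ of Theorem \ref{mainthm1}; the weak convergence of the Cesàro mean in $H^N = \R^{Nd}$ upgrades automatically to norm convergence because the two topologies agree in finite dimensions.

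There is no substantive obstacle in this argument. It is essentially a bookkeeping exercise that translates the abstract reflexive Banach framework of Theorem \ref{mainthm1} into the unconstrained finite-dimensional setting. The only point worth a moment of care is confirming that the weak topological hypotheses of \eqref{StandardAssumpf1thrufN} reduce to mere continuity here, and that the normal cones vanish so that the variational inequality formulation becomes the differential inclusion formulation stated in the proposition.
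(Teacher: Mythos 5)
Your proposal is correct and is exactly the argument the paper intends: the paper offers no written proof beyond the remark that Theorem \ref{mainthm1} yields the proposition, and your verification that \eqref{StandardAssumpf1thrufN} reduces to the stated continuity, convexity, and monotonicity hypotheses when $V=H=\R^d$ and $X_j=\R^d$ (so that the normal cones vanish and weak convergence upgrades to norm convergence) is precisely the bookkeeping being left to the reader.
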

\begin{rem}
If 
$$
\sum^N_{j=1}(\partial_{x_j}f_j(x)-\partial_{x_j}f_j(y))\cdot (x_j-y_j)\ge \theta|x-y|^2
$$
for all $x,y\in \R^{Nd}$ and some $\theta>0$, then $f_1,\dots, f_N$ 
has a unique Nash equilibrium at some $z\in \R^{Nd}$ and 
$$
|u(t)-z|\le e^{-\theta t}|u^0-z|,\quad t\ge 0.
$$
\end{rem}
\begin{rem}
A basic example we had in mind when considering Proposition \ref{FiniteDWholeSpaceProp} was  
$$
f_j(x)=\sum^N_{k,\ell=1}\frac{1}{2}A^j_{k,\ell}x_k\cdot x_\ell
$$
for $x=(x_1,\dots,x_N)\in \R^{Nd}$ and $j=1,\dots, N$. Here $A^j_{k,\ell}$ are $d\times d$ symmetric matrices which additionally satisfy 
$$
A^j_{k,\ell}=A^j_{\ell,k}
$$
$(j,k,\ell=1,\dots, N)$. The required monotonicity condition on $f_1,\dots, f_N$ is satisfied provided
\be\label{positivityAjjk}
\sum^{N}_{j,k=1}A^j_{k,j}y_j\cdot y_k\ge 0
\ee
for each $y\in \R^{Nd}$.
\end{rem}

\section{Examples in function spaces}\label{InFiniteDimSect}
In this final section, we will apply the ideas we have developed to consider functionals defined on Lebesgue and Sobolev spaces. As we have 
done above, we will consider both the existence of Nash equilibria and their approximation by continuous time flows. 
Throughout, we will assume $\Omega\subset \R^d$ is a bounded domain with smooth boundary. We will also change notation by using $v$ for the variables in which our functionals are defined. This allows us to reserve $x$ for points in $\Omega$. Finally, we will use $|\cdot |$ for any norm on a finite dimensional space. 

\subsection{A few remarks on Sobolev spaces}
Before discussing examples, let us recall a few facts about Sobolev spaces and establish some notation. A good reference for this material is Chapter 5 and 6 of \cite{MR2597943}. First we note  
$$
H^1_0(\Omega)\subset L^2(\Omega)\subset H^{-1}(\Omega)
$$
where each inclusion is compact.  Here $L^2(\Omega)$ is the space of Lebesgue measurable functions on $\Omega$ which are square integrable equipped with the standard inner product.
The space $H^1_0(\Omega)$ is the closure of all smooth functions having compact support in $\Omega$ in the norm 
$$
u\mapsto \left(\int_{\Omega}|Du|^2dx\right)^{1/2}.
$$

\par The topological dual of $H^1_0(\Omega)$ is denoted $H^{-1}(\Omega)$.  Recall that the Dirichlet Laplacian $-\Delta: H^1_0(\Omega)\rightarrow H^{-1}(\Omega)$ is defined via
$$
(-\Delta u,v)=\int_{\Omega}Du\cdot Dvdx
$$
for $u,v\in H^1_0(\Omega)$.  Here we are using $(\cdot,\cdot)$ as the natural pairing between $H^{-1}(\Omega)$ and $H^1_0(\Omega)$. Moreover, $-\Delta$ is an isometry; in particular, this map is invertible. This allows us to define the following inner product on $H^{-1}(\Omega)$
$$
(f,g)\mapsto(f,(-\Delta)^{-1}g).
$$
We also note that if $f\in L^2(\Omega)$, the inner product between $f$ and $g$ simplifies to 
\be\label{HminusInnerObs}
(f,(-\Delta)^{-1}g)=\int_\Omega f(-\Delta)^{-1}gdx.
\ee
\par In addition, we will consider the space $H^2(\Omega)\cap H^1_0(\Omega)$ consisting of $H^1_0(\Omega)$ functions for which $-\Delta u\in L^2(\Omega)$. This is a Hilbert space endowed with the inner product
$$
(u,v)\mapsto \int_\Omega \Delta u\;\Delta v dx.
$$

\subsection{$H=L^2(\Omega)$}
For $v\in H^1_0(\Omega)^N$, define
\be\label{FirstL2Examplefjay}
f_j(v)=\int_{\Omega}H_j(\nabla v_j)+F_j(v)-h_jv_j\;dx
\ee
for $j=1,\dots, N$. Here each $h_j\in L^2(\Omega)$, and $H_j: \R^d\rightarrow \R$ is assumed to be smooth and satisfy
\be\label{GjayUniformEst}
\theta |p-q|^2\le (\nabla H_j(p)-\nabla H_j(q))\cdot (p-q)\le \frac{1}{\theta}|p-q|^2
\ee
for all $p,q\in \R^d$ and some $\theta\in (0,1]$. We'll also suppose $F_1,\dots, F_N: \R^N\rightarrow \R$ are 
smooth, 
\be
\sum^N_{j=1}(\partial_{z_j}F_j(z)-\partial_{z_j}F_j(w))(z_j-w_j)\ge 0
\ee
for $z,w\in \R^N$, and there is $C$ such that 
\be\label{FjayAssump}
|F_j(z)|\le C(1+|z|^2)\;\text{and}\quad |\partial_{z_j}F_j(z)|\le C(1+|z|)
\ee
for $z\in \R^N$ and $j=1,\dots, N$.

\par Using these assumptions, it is straightforward to verify that $f_1,\dots, f_N$ fulfills \eqref{StandardAssumpf1thrufN} with $X_1=\cdots=X_N=V=H^1_0(\Omega).$    The following lemma also follows by routine computations.
\begin{lem}
For each $v,w\in H^1_0(\Omega)^N$ and $j=1,\dots, N$, 
$$
\partial_{v_j}f_j(v)=\{\nabla_{v_j}f_j(v)\}
$$
with 
$$
(\nabla_{v_j}f_j(v),w_j)=\int_\Omega \nabla H_j(\nabla v_j)\cdot \nabla w_j+\partial_{z_j}F_j(v)w_j-h_jw_jdx
$$
and
$$
\sum^N_{j=1}(\nabla_{v_j}f_j(v)-\nabla_{v_j}f_j(w),v_j-w_j)\ge \theta \int_{\Omega}|\nabla v-\nabla w|^2dx.
$$
Moreover, $v$ is a Nash equilibrium of $f_1,\dots, f_N$ if and only if it is a weak solution of 
\be
\begin{cases}
-\textup{div}(\nabla H_j(\nabla v_j))+\partial_{z_j}F_j(v)=h_j &\textup{in}\;\Omega\\
\hspace{1.87in}v_j=0 &\textup{on}\;\partial\Omega
\end{cases}
\ee
$(j=1,\dots, N)$.
\end{lem}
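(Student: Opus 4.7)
The plan is to establish the three assertions in order, all by direct computation from the formula \eqref{FirstL2Examplefjay} combined with the hypotheses on $H_j$, $F_j$, and $h_j$.

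For (i), I would fix $v \in H^1_0(\Omega)^N$ and a test function $\phi \in H^1_0(\Omega)$ and pass to the limit $t \to 0$ in the difference quotient
\[
\frac{f_j(v_j + t\phi, v_{-j}) - f_j(v)}{t} = \int_\Omega \frac{H_j(\nabla v_j + t\nabla\phi) - H_j(\nabla v_j)}{t}\,dx + \int_\Omega \frac{F_j(v_j + t\phi, v_{-j}) - F_j(v)}{t}\,dx - \int_\Omega h_j \phi\,dx.
\]
Integrating \eqref{GjayUniformEst} gives the linear bound $|\nabla H_j(p)| \le |\nabla H_j(0)| + \theta^{-1}|p|$, and \eqref{FjayAssump} gives $|\partial_{z_j} F_j(z)| \le C(1+|z|)$; applied through the mean value theorem, these produce fixed $L^1(\Omega)$ envelopes for the two difference quotients on $|t| \le 1$, since $\nabla v_j, \nabla \phi \in L^2(\Omega)$ and $v, \phi \in L^2(\Omega)$ on the bounded domain $\Omega$. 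Dominated convergence then yields the claimed formula for $\nabla_{v_j} f_j(v)$, and the same growth bounds show it defines an element of $H^{-1}(\Omega)$. Finally, since $w_j \mapsto f_j(w_j, v_{-j})$ is convex---this being the decoupled content of the second condition in \eqref{StandardAssumpf1thrufN}, whose verification is asserted in the paragraph preceding the lemma---a standard fact from convex analysis identifies the convex subdifferential $\partial_{v_j} f_j(v)$ with the singleton of its Gateaux derivative.

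For (ii), I would expand
\[
\sum_{j=1}^N (\nabla_{v_j} f_j(v) - \nabla_{v_j} f_j(w), v_j - w_j) = \sum_{j=1}^N \int_\Omega \bigl(\nabla H_j(\nabla v_j) - \nabla H_j(\nabla w_j)\bigr) \cdot \nabla(v_j - w_j)\,dx + \sum_{j=1}^N \int_\Omega \bigl(\partial_{z_j} F_j(v) - \partial_{z_j} F_j(w)\bigr)(v_j - w_j)\,dx.
\]
The first sum is bounded below by $\theta \int_\Omega |\nabla v - \nabla w|^2\,dx$ pointwise by the lower inequality in \eqref{GjayUniformEst}, while the second is pointwise nonnegative by the monotonicity hypothesis on $\sum_j \partial_{z_j} F_j$. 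For (iii), by part (i) the condition that $v$ is a Nash equilibrium---namely, that $v_j$ minimizes the convex functional $w_j \mapsto f_j(w_j, v_{-j})$ over $H^1_0(\Omega)$ for each $j$---is equivalent, again by convexity and Gateaux differentiability, to $\nabla_{v_j} f_j(v) = 0$ in $H^{-1}(\Omega)$. Unpacking this equation using the formula from (i) is precisely the weak formulation of the Dirichlet problem displayed in the statement.

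I expect the only moderately delicate step to be the dominated convergence argument in (i), and it is routine given the linear growth of $\nabla H_j$ and $\partial_{z_j} F_j$: both difference quotients are controlled uniformly on $|t| \le 1$ by integrable functions of $v, \phi, \nabla v_j, \nabla \phi$, so no tightness or regularization argument is needed.
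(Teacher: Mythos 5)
Your proof is correct and is essentially the argument the paper has in mind: the paper omits the proof entirely, stating only that the lemma ``follows by routine computations,'' and your difference-quotient/dominated-convergence computation of the Gateaux derivative, the pointwise monotonicity estimates from \eqref{GjayUniformEst} and the hypothesis on $F_1,\dots,F_N$, and the identification of Nash equilibria with critical points of the convex maps $w_j\mapsto f_j(w_j,v_{-j})$ are exactly those routine computations. The only step worth making explicit is that the two-sided bound \eqref{GjayUniformEst} controls the symmetric Hessian $D^2H_j$ in operator norm by $\theta^{-1}$, which is what justifies the Lipschitz (hence linear growth) bound on $\nabla H_j$ that your domination argument uses.
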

\begin{rem}
In this statement, $(\cdot,\cdot)$ denotes the natural pairing between $H^{-1}(\Omega)$ and $H^1_0(\Omega)$.  
\end{rem}

\par We now can state a result involving the approximation of Nash equilibrium for $f_1,\dots, f_N$. 
\begin{prop}
Let $u^0\in (H^2(\Omega)\cap H^1_0(\Omega))^N$. There is a unique Lipschitz continuous
$$
u: [0,\infty)\rightarrow L^2(\Omega)^N;  t\mapsto u(\cdot,t)
$$ 
such that $u(\cdot,t)\in (H^2(\Omega)\cap H^1_0(\Omega))^N$ for each $t\ge 0$ and 
\be\label{L2IVP}
\begin{cases}
\partial_t u_j =\textup{div}(\nabla H_j(\nabla u_j))-\partial_{z_j}F_j(u) +h_j\;\; &\textup{in}\;\Omega\times(0,\infty)\\
\;\;\;u_j=0 \;\; & \textup{on}\; \partial\Omega\times(0,\infty)\\
\;\;\;u_j=u^0_j \;\; & \textup{on}\; \Omega\times\{0\}
\end{cases}
\ee
$(j=1,\dots, N)$.  Furthermore, there is $\lambda>0$ such that 
\be
\int_{\Omega}|u(x,t)-v(x)|^2dx\le e^{-2\lambda t}\int_{\Omega}|u^0(x)-v(x)|^2dx
\ee
for $t\ge 0$. Here $v$ is the unique Nash equilibrium of $f_1,\dots,f_N$.
\end{prop}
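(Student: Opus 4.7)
The plan is to invoke Theorem \ref{mainthm1}(i) together with Corollary \ref{ExistenceNashCoercive}, taking $V = H^1_0(\Omega)$, $X_j = V$ for each $j$, and $H = L^2(\Omega)$, so that the Gelfand triple $H^1_0(\Omega)\subset L^2(\Omega)\subset H^{-1}(\Omega)$ places us in the abstract setting of Section \ref{GenSettingSubsection}. The paragraph and lemma preceding the statement already verify that $f_1,\dots,f_N$ satisfies \eqref{StandardAssumpf1thrufN}, that each $\nabla_{v_j}f_j$ is single valued, and that
\[
\sum_{j=1}^N (\nabla_{v_j}f_j(v)-\nabla_{v_j}f_j(w), v_j-w_j) \ge \theta \int_\Omega |\nabla v - \nabla w|^2 \, dx.
\]
Combining this with Poincar\'e's inequality $\|v_j\|_{L^2}^2 \le C_P \|\nabla v_j\|_{L^2}^2$ yields the coercivity hypothesis \eqref{f1throughfNcoercive} of Corollary \ref{ExistenceNashCoercive}; this produces the unique Nash equilibrium $v$ and some $\lambda>0$ realizing the exponential $L^2$-contraction.

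Next I would identify the domain. Since $X_j=V$, all normal cones vanish and
\[
\mathcal{D} = \bigl\{v \in H^1_0(\Omega)^N : \nabla_{v_j}f_j(v) \in L^2(\Omega) \text{ for each } j\bigr\}.
\]
The growth condition \eqref{FjayAssump} together with the embedding $H^1_0\hookrightarrow L^2$ yields $\partial_{z_j}F_j(v)-h_j\in L^2$ for every $v\in H^1_0(\Omega)^N$, so membership in $\mathcal{D}$ is equivalent to $\textup{div}(\nabla H_j(\nabla v_j))\in L^2(\Omega)$ for each $j$. For $v\in(H^2\cap H^1_0)^N$, expanding this divergence by the chain rule and using the bound on $D^2H_j$ implied by \eqref{GjayUniformEst} gives the required $L^2$ estimate, so $(H^2\cap H^1_0)^N\subset\mathcal{D}$ and in particular $u^0\in\mathcal{D}$.

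The main obstacle is the reverse inclusion $\mathcal{D}\subset(H^2\cap H^1_0)^N$, which is what upgrades the membership $u(\cdot,t)\in\mathcal{D}$ furnished by Theorem \ref{mainthm1}(i) to the stated $u(\cdot,t)\in(H^2\cap H^1_0)^N$. I would establish this via standard $H^2$-regularity for the quasilinear uniformly elliptic operator $v_j \mapsto -\textup{div}(\nabla H_j(\nabla v_j))$: the difference-quotient method, using the two-sided bound \eqref{GjayUniformEst}, yields interior $H^2$ regularity, while smoothness of $\partial\Omega$ together with local flattening gives regularity up to the boundary. Once this identification is in hand, Theorem \ref{mainthm1}(i) delivers the unique Lipschitz path $u$ with $\dot u_j = -\nabla_{v_j}f_j(u)$ in $L^2(\Omega)$, which by the explicit formula from the lemma is exactly the parabolic system \eqref{L2IVP} (the homogeneous Dirichlet condition being encoded in $u_j(\cdot,t)\in H^1_0$), and Corollary \ref{ExistenceNashCoercive} yields the exponential contraction.
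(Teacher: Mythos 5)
Your proposal is correct and matches the paper's argument: the paper likewise reduces everything to Theorem \ref{mainthm1} and Corollary \ref{ExistenceNashCoercive} after identifying $\{v\in H^1_0(\Omega)^N:\nabla_{v_1}f_1(v),\dots,\nabla_{v_N}f_N(v)\in L^2(\Omega)\}$ with $(H^2(\Omega)\cap H^1_0(\Omega))^N$ via elliptic regularity for the uniformly elliptic operators $v_j\mapsto-\textup{div}(\nabla H_j(\nabla v_j))$ under \eqref{GjayUniformEst}. The only difference is cosmetic: where you sketch the difference-quotient argument, the paper simply cites Theorem 1 of Section 8.3 of Evans.
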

\begin{rem}
For almost every $t\ge 0$, the PDE in \eqref{L2IVP} holds almost everywhere in $\Omega$; the boundary condition holds in the trace sense; and $u(\cdot, 0)=u^0$ as $L^2(\Omega)^N$ functions.
\end{rem}
\begin{proof}
Suppose that $\zeta\in L^2(\Omega)^N$ and $v\in H^1_0(\Omega)^N$ is a weak solution of 
\be
\begin{cases}
-\textup{div}(\nabla H_j(\nabla v_j))+\partial_{z_j}F_j(v)=h_j+\zeta_j &\textup{in}\;\Omega\\
\hspace{1.87in}v_j=0 &\textup{on}\;\partial\Omega.
\end{cases}
\ee
As $\partial\Omega$ is assumed to be smooth and in view of the hypothesis \eqref{GjayUniformEst}, elliptic regularity (Theorem 1, section 8.3 of \cite{MR2597943}) implies $v\in (H^2(\Omega)\cap H^1_0(\Omega))^N$. 
Therefore, 
\begin{align}
\left\{v\in H^1_0(\Omega)^N:\nabla_{v_1}f_1(v),\dots,\nabla_{v_N}f_N(v)\in L^2(\Omega) \right\}=(H^2(\Omega)\cap H^1_0(\Omega))^N.
\end{align}
Theorem \ref{mainthm1} and Corollary \ref{ExistenceNashCoercive} now allow us to conclude. 
\end{proof}

\subsection{Another example with $H=L^2(\Omega)$}
Define 
$$
f_j(v)=\int_\Omega L_j(\nabla v_1,\dots,\nabla v_N)-h_jv_j\;dx
$$
for $v\in H^1_0(\Omega)^N$.  Here $h_1,\dots, h_N\in L^2(\Omega)$ and each $L_j: \R^{Nd}\rightarrow \R$ is smooth with 
\be\label{LjUnifEst}
\displaystyle\theta|p-q|^2\le \sum^N_{j=1}(\nabla_{p_j}L_j(p)-\nabla_{p_j}L_j(q))\cdot(p_j-q_j)\le \frac{1}{\theta}|p-q|^2
\ee
for all $p,q\in\R^{Nd}$ and some $\theta\in (0,1]$. We'll also assume there is $C>0$ such that 
\be
|\nabla_{p_j}L_j(p)|\le C(1+|p|)
\ee 
for each $p\in \R^{Nd}$ and $j=1,\dots, N$.
 
\par Direct computation gives the following lemma. 
\begin{lem}
For each $v,w\in H^1_0(\Omega)^N$ and $j=1,\dots, N$, 
$$
\partial_{v_j}f_j(v)=\{\nabla_{v_j}f_j(v)\}
$$
with 
\be
(\nabla_{v_j}f_j(v),w_j)=\int_\Omega\nabla_{p_j}L_j(\nabla v_1,\dots,\nabla v_N)\cdot \nabla w_j-h_jw_j\;dx
\ee
and
$$
\sum^N_{j=1}(\nabla_{v_j}f_j(v)-\nabla_{v_j}f_j(w),v_j-w_j)\ge \theta \int_{\Omega}|\nabla v-\nabla w|^2dx.
$$
Moreover, 
$$
\nabla_{v_j}f_j(v+tw)\rightharpoonup \nabla_{v_j}f_j(v)
$$
as $t\rightarrow 0^+$, and $v$ is a Nash equilibrium of $f_1,\dots, f_N$ if and only if it is a weak solution of 
\be
\begin{cases}
-\textup{div}(\nabla_{p_j} L_j(\nabla v_1,\dots, \nabla v_N))=h_j &\textup{in}\;\Omega\\
\hspace{1.88in}v_j=0 &\textup{on}\;\partial\Omega
\end{cases}
\ee
$(j=1,\dots, N)$.
\end{lem}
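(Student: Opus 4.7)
The plan is to prove the four claims of the lemma — the identification of $\partial_{v_j}f_j(v)$ with the singleton $\{\nabla_{v_j}f_j(v)\}$ along with its explicit formula, the strong monotonicity estimate, hemicontinuity, and the Nash equilibrium characterization — via routine variational calculus combined with the structural hypotheses on $L_1,\dots,L_N$.

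First I would compute the Gateaux derivative of $f_j$ in the $v_j$-direction. For fixed $v\in H^1_0(\Omega)^N$ and $w_j\in H^1_0(\Omega)$, smoothness of $L_j$ makes $t\mapsto L_j(\nabla v_1,\dots,\nabla v_j+t\nabla w_j,\dots,\nabla v_N)$ differentiable pointwise, while the growth bound $|\nabla_{p_j}L_j(p)|\le C(1+|p|)$ combined with the mean value theorem furnishes an $L^1(\Omega)$ dominant for the difference quotients via Cauchy-Schwarz. Dominated convergence yields the stated formula for $(\nabla_{v_j}f_j(v),w_j)$. To identify the full subdifferential, I would specialize \eqref{LjUnifEst} by setting $p_k=q_k$ for $k\ne j$, obtaining strong monotonicity of $p_j\mapsto \nabla_{p_j}L_j(p_j,p_{-j})$, hence strong convexity of $p_j\mapsto L_j(p_j,p_{-j})$ for fixed $p_{-j}$. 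Integration transfers this convexity to $w_j\mapsto f_j(w_j,v_{-j})$ on $H^1_0(\Omega)$, so by standard convex analysis the subdifferential $\partial_{v_j}f_j(v)$ (as defined in the paper via the variational inequality) coincides with the Gateaux derivative just computed.

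The monotonicity estimate is then immediate from integrating \eqref{LjUnifEst} pointwise with $p=\nabla v(x)$, $q=\nabla w(x)$: the sum $\sum_{j=1}^N(\nabla_{v_j}f_j(v)-\nabla_{v_j}f_j(w),v_j-w_j)$ equals $\int_\Omega \sum_{j=1}^N(\nabla_{p_j}L_j(\nabla v)-\nabla_{p_j}L_j(\nabla w))\cdot(\nabla v_j-\nabla w_j)\,dx$, and the integrand is bounded below almost everywhere by $\theta|\nabla v(x)-\nabla w(x)|^2$. For hemicontinuity, I would test against arbitrary $\phi\in H^1_0(\Omega)$: the pairing $(\nabla_{v_j}f_j(v+tw)-\nabla_{v_j}f_j(v),\phi)$ is the integral of $(\nabla_{p_j}L_j(\nabla v+t\nabla w)-\nabla_{p_j}L_j(\nabla v))\cdot \nabla\phi$, whose integrand converges pointwise to $0$ as $t\to 0^+$ by continuity of $\nabla_{p_j}L_j$. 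The growth bound applied to $\nabla_{p_j}L_j$ supplies an $L^2$-dominant for the bracket (uniformly for $|t|\le 1$), so Cauchy-Schwarz with $\nabla\phi\in L^2(\Omega)$ permits dominated convergence, yielding weak convergence in $H^{-1}(\Omega)$.

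For the Nash equilibrium characterization, by definition $v$ is a Nash equilibrium iff each $v_j$ minimizes the convex functional $w_j\mapsto f_j(w_j,v_{-j})$ over $H^1_0(\Omega)$. Using the singleton identification $\partial_{v_j}f_j(v)=\{\nabla_{v_j}f_j(v)\}$, this reduces to $\nabla_{v_j}f_j(v)=0$ in $H^{-1}(\Omega)$; substituting the explicit formula gives $\int_\Omega \nabla_{p_j}L_j(\nabla v)\cdot\nabla w_j\,dx=\int_\Omega h_jw_j\,dx$ for every $w_j\in H^1_0(\Omega)$, which is exactly the weak form of $-\mathrm{div}(\nabla_{p_j}L_j(\nabla v))=h_j$ with homogeneous Dirichlet data. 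I do not expect any conceptually deep obstacle; the main item requiring care is bookkeeping the dominating functions across the Gateaux, monotonicity, and hemicontinuity calculations, all of which are ultimately controlled by the single growth bound $|\nabla_{p_j}L_j(p)|\le C(1+|p|)$ combined with the fact that $v,w,\phi \in H^1_0(\Omega)$ have $L^2$ gradients.
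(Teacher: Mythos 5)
Your proposal is correct and matches the paper's intent: the paper offers no proof of this lemma, asserting only that it follows by ``direct computation,'' and your argument supplies exactly the routine details that claim presupposes. In particular, your key observations --- the $L^1$ domination of difference quotients via the linear growth of $\nabla_{p_j}L_j$, the partial strong convexity obtained by freezing $p_{-j}$ in \eqref{LjUnifEst} (which identifies the subdifferential with the Gateaux derivative), the pointwise integration of \eqref{LjUnifEst} for the monotonicity bound, and the dominated-convergence argument for hemicontinuity --- are all sound and are the standard route.
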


\par With this lemma, we can now establish the subsequent assertion about the Nash equilibrium of $f_1,\dots, f_N$. 
\begin{prop}
Let $u^0\in (H^2(\Omega)\cap H^1_0(\Omega))^N$. There is a unique Lipschitz continuous 
$$
u: [0,\infty)\rightarrow L^2(\Omega)^N;  t\mapsto u(\cdot,t)
$$ 
such that $u(\cdot,t)\in (H^2(\Omega)\cap H^1_0(\Omega))^N$ for each $t\ge 0$ and  
\be
\begin{cases}
\partial_t u_j =\textup{div}(\nabla_{p_j} L_j(\nabla u_1,\dots, \nabla u_N)) +h_j\;\; &\textup{in}\;\Omega\times(0,\infty)\\
\;\;\;u_j=0 \;\; & \textup{on}\; \partial\Omega\times(0,\infty)\\
\;\;\;u_j=u^0_j \;\; & \textup{on}\; \Omega\times\{0\}
\end{cases}
\ee
$(j=1,\dots, N$). Furthermore, there is $\lambda>0$ such that 
\be
\int_{\Omega}|u(x,t)-v(x)|^2dx\le e^{-2\lambda t}\int_{\Omega}|u^0(x)-v(x)|^2dx
\ee
for $t\ge 0$. Here $v$ is the unique Nash equilibrium of $f_1,\dots,f_N$.
\end{prop}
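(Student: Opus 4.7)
The plan is to apply Theorem \ref{mainthm2} together with Remark \ref{BrowderMintyRemark}, taking $V=H^1_0(\Omega)$ and $H=L^2(\Omega)$. The preceding lemma already supplies the three hypotheses in \eqref{ThirdAssumpf1thrufN}: the single-valuedness of each $\partial_{v_j}f_j$, the weak hemicontinuity along line segments, and the monotonicity of $\sum_j(\nabla_{v_j}f_j(v)-\nabla_{v_j}f_j(w),v_j-w_j)$. That same lemma yields the strong monotonicity estimate $\sum_j(\nabla_{v_j}f_j(v)-\nabla_{v_j}f_j(w),v_j-w_j)\ge\theta\|v-w\|^2$ in the $H^1_0$-norm, so Remark \ref{BrowderMintyRemark} will produce both the unique Nash equilibrium $v$ and the exponential contraction estimate, provided I can first realize the flow via Theorem \ref{mainthm2}.

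My next step is to identify the abstract domain
\[ \mathcal{D}=\{v\in H^1_0(\Omega)^N : \nabla_{v_j}f_j(v)\in L^2(\Omega),\; j=1,\dots,N\} \]
attached to Theorem \ref{mainthm2}. A function $v$ lies in $\mathcal{D}$ exactly when there exist $\zeta_1,\dots,\zeta_N\in L^2(\Omega)$ for which $v$ is a weak $H^1_0$ solution of
\[ -\textup{div}(\nabla_{p_j}L_j(\nabla v_1,\dots,\nabla v_N))=h_j+\zeta_j \quad (j=1,\dots,N). \]
The forward inclusion $(H^2(\Omega)\cap H^1_0(\Omega))^N\subset\mathcal{D}$ is straightforward: the Lipschitz bound in \eqref{LjUnifEst} together with the chain rule for Sobolev functions places $\nabla_{p_j}L_j(\nabla v)$ in $H^1(\Omega)^d$ whenever $\nabla v\in H^1(\Omega)^{Nd}$, so its distributional divergence belongs to $L^2(\Omega)$.

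The reverse inclusion $\mathcal{D}\subset(H^2(\Omega)\cap H^1_0(\Omega))^N$ is the main obstacle and is the coupled-system analogue of the elliptic regularity step used in the previous proposition. Since the nonlinearity $p\mapsto(\nabla_{p_j}L_j(p))_{j=1}^N$ couples the $N$ equations through their principal parts, the scalar theorem from Section 8.3 of \cite{MR2597943} does not apply directly. My plan is to run Nirenberg's tangential difference quotient method on the full vector system: after flattening the boundary in the standard way and introducing a cutoff $\eta$, I would test the differenced equations against $\eta^2 D_h^k v$, sum over $j$, and exploit the uniform monotonicity and Lipschitz bounds \eqref{LjUnifEst} to control $\sum_j\int|\nabla D_h^k v_j|^2\,dx$ by $\sum_j(\|h_j\|_{L^2}^2+\|\zeta_j\|_{L^2}^2)$ plus lower-order terms absorbed through Young's inequality. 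The missing purely normal second derivatives are then read off algebraically from the $N$ equations using the invertibility guaranteed by the lower bound in \eqref{LjUnifEst}.

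With the identification $\mathcal{D}=(H^2(\Omega)\cap H^1_0(\Omega))^N$ in hand, Theorem \ref{mainthm2} delivers a unique Lipschitz continuous $u:[0,\infty)\to L^2(\Omega)^N$ solving the initial value problem and satisfying $u(\cdot,t)\in(H^2(\Omega)\cap H^1_0(\Omega))^N$ for every $t\ge 0$, which translates into the stated parabolic system with the Dirichlet and initial conditions interpreted in the usual weak sense. Finally, Remark \ref{BrowderMintyRemark} combined with the Poincar\'e inequality $|v|\le C\|v\|$ (embedding $H^1_0(\Omega)^N$ into $L^2(\Omega)^N$) yields the exponential decay $|u(t)-v|\le e^{-\lambda t}|u^0-v|$ with $\lambda=\theta/C^2$, which squared gives the claimed inequality.
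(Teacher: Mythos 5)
Your proposal is correct and follows essentially the same route as the paper: identify the abstract domain of Theorem \ref{mainthm2} with $(H^2(\Omega)\cap H^1_0(\Omega))^N$ via elliptic regularity for the monotone system, then invoke Theorem \ref{mainthm2} and Remark \ref{BrowderMintyRemark} for existence, uniqueness, and the exponential contraction. The only difference is that where you sketch the tangential difference-quotient argument for the coupled system, the paper simply cites the corresponding regularity remark in \cite{MR2597943}.
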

\begin{proof}
Suppose that $\xi\in L^2(\Omega)^N$ and $v\in H^1_0(\Omega)^N$ is a weak solution of 
\be
\begin{cases}
-\textup{div}(\nabla_{p_j} L_j(\nabla v_1,\dots, \nabla v_N))=h_j+\xi_j &\textup{in}\;\Omega\\
\hspace{1.88in}v_j=0 &\textup{on}\;\partial\Omega.
\end{cases}
\ee
By the uniform estimate \eqref{LjUnifEst}, elliptic regularity implies $v\in (H^2(\Omega)\cap H^1_0(\Omega))^N$ (see the remark at the end of section 9.1 in \cite{MR2597943}).  As a result, 
\begin{align}
\left\{v\in H^1_0(\Omega)^N:\nabla_{v_1}f_1(v),\dots,\nabla_{v_N}f_N(v)\in L^2(\Omega) \right\}=(H^2(\Omega)\cap H^1_0(\Omega))^N.
\end{align}
We then conclude by Theorem \ref{mainthm2} and Remark \ref{BrowderMintyRemark}. 
\end{proof}

\subsection{$H=H^{-1}(\Omega)$}
For a given $v\in L^2(\Omega)^N$, set 
$$
f_j(v)=\int_{\Omega}F_j(v) dx-\langle h_j,v_j\rangle
$$
for $j=1,\dots, N$. Here $h_1,\dots, h_N\in H^{-1}(\Omega)$, and $\langle\cdot,\cdot\rangle$ denotes the  inner product in $H^{-1}(\Omega)$.  
We will assume $F_1,\dots, F_N: \R^N\rightarrow\R$ are smooth and satisfy \eqref{FjayAssump} and 
\be\label{FjayAssump2}
\begin{cases}
|\partial_{z_k}\partial_{z_j}F_j(z)|\le C\\\\
\displaystyle\theta|z-w|^2\le \sum^N_{i=1}(\partial_{z_i}F_i(z)-\partial_{z_i}F_i(w))(z_i-w_i)
\end{cases}
\ee
for $z,w\in \R^N$, $j,k=1,\dots, N$, and some $C,\theta>0$. For convenience, we will also assume 
\be\label{FjayAssump3}
\partial_{z_j}F_j(0)=0
\ee
for $j=1,\dots, N$.

\par The following observations can be verified by routine computations.  
\begin{lem}
For each $v,w\in L^2(\Omega)^N$ and $j=1,\dots, N$, 
$$
\partial_{v_j}f_j(v)=\{\nabla_{v_j}f_j(v)\}
$$
with 
\be
\nabla_{v_j}f_j(v)=\partial_{z_j}F_j(v)-(-\Delta)^{-1}h_j.
\ee
Moreover, $v\in H^1_0(\Omega)^N$ is a Nash equilibrium of $f_1,\dots, f_N$ if and only if it is a weak solution of 
\be\label{HminusNashCond}
\begin{cases}
-\Delta(\partial_{z_j}F_j(v))=h_j &\textup{in}\;\Omega\\
\hspace{.84in}v_j=0 &\textup{on}\;\partial\Omega
\end{cases}
\ee
$(j=1,\dots, N)$.
\end{lem}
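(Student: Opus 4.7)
The strategy is to identify $f_j$ as a sum of an integral functional and a bounded linear functional on $L^2(\Omega)^N$, verify that $f_j(\cdot, v_{-j})$ is convex and G\^{a}teaux differentiable in $v_j$, and then invoke the standard fact that the subdifferential of a convex, G\^{a}teaux differentiable function on $L^2(\Omega)$ is the singleton consisting of its gradient. The Nash characterization then reduces to the first-order Euler equation for a convex unconstrained minimization.

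The first step is to rewrite $\langle h_j, v_j\rangle$ in integral form. Using \eqref{HminusInnerObs} with $f = v_j \in L^2(\Omega)$, and noting that $(-\Delta)^{-1} v_j \in H^2 \cap H^1_0(\Omega)$ by elliptic regularity so that one may integrate by parts, one obtains
$$\langle h_j, v_j\rangle = (h_j, (-\Delta)^{-1} v_j) = \int_\Omega \bigl((-\Delta)^{-1} h_j\bigr) v_j\, dx,$$
a bounded linear functional of $v_j \in L^2(\Omega)$ because $(-\Delta)^{-1} h_j \in H^1_0(\Omega) \subset L^2(\Omega)$. The growth bounds \eqref{FjayAssump} next imply that $v \mapsto \int_\Omega F_j(v)\,dx$ is continuously Fr\'{e}chet differentiable on $L^2(\Omega)^N$: the mean value theorem combined with $|\partial_{z_j} F_j(z)| \le C(1+|z|)$ provides an $L^1$ dominator for the difference quotients, and dominated convergence delivers
$$(\nabla_{v_j} f_j(v), w) = \int_\Omega \bigl[\partial_{z_j} F_j(v) - (-\Delta)^{-1} h_j\bigr] w\, dx$$
for every $w \in L^2(\Omega)$. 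To upgrade this to the singleton assertion, I would extract convexity from \eqref{FjayAssump2}: restricting the monotonicity inequality to $z, w$ differing only in their $j$-th coordinate shows $z_j \mapsto \partial_{z_j} F_j(z)$ is nondecreasing, hence $z_j \mapsto F_j(z)$ is convex for each fixed $z_{-j}$, and integration together with the affine perturbation preserves convexity of $f_j(\cdot, v_{-j})$. The standard subgradient characterization of convex G\^{a}teaux differentiable functions then gives $\partial_{v_j} f_j(v) = \{\nabla_{v_j} f_j(v)\}$.

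For the Nash characterization, $v \in L^2(\Omega)^N$ is a Nash equilibrium if and only if $v_j$ minimizes the convex function $y_j \mapsto f_j(y_j, v_{-j})$ over $L^2(\Omega)$ for each $j$; by convexity and differentiability, this is equivalent to the unconstrained Euler equation $\nabla_{v_j} f_j(v) = 0$ in $L^2(\Omega)$, i.e.\ $\partial_{z_j} F_j(v) = (-\Delta)^{-1} h_j$ as $L^2$ functions. For $v \in H^1_0(\Omega)^N$, this $L^2$ identity is exactly the weak formulation of $-\Delta(\partial_{z_j} F_j(v)) = h_j$ with $\partial_{z_j} F_j(v) \in H^1_0(\Omega)$, and the boundary condition $v_j = 0$ is simply the hypothesis on $v$. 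The delicate bookkeeping step throughout is the initial rewriting of $\langle h_j, v_j\rangle$ as the concrete $L^2$-pairing $\int_\Omega ((-\Delta)^{-1} h_j) v_j\, dx$, which rests on the identification of $H = H^{-1}(\Omega)$ with its own dual through the Riesz isomorphism $(-\Delta)^{-1}$; once that reduction is in hand, everything else is routine convex analysis on $L^2(\Omega)^N$.
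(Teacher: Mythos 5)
Your argument is correct and is precisely the ``routine computation'' that the paper leaves unproved: reduce $\langle h_j,v_j\rangle$ via \eqref{HminusInnerObs} and the symmetry of $(-\Delta)^{-1}$ to the $L^2$-pairing $\int_\Omega ((-\Delta)^{-1}h_j)v_j\,dx$, differentiate the Nemytskii term under the growth bounds \eqref{FjayAssump}, use the single-coordinate specialization of \eqref{FjayAssump2} for convexity in $v_j$, and read off the singleton subdifferential and the Euler equation $\partial_{z_j}F_j(v)=(-\Delta)^{-1}h_j$. The only step you assert without justification is that $\partial_{z_j}F_j(v)\in H^1_0(\Omega)$ when $v\in H^1_0(\Omega)^N$, which is what lets you rewrite that $L^2$ identity as the weak formulation of $-\Delta(\partial_{z_j}F_j(v))=h_j$; this does hold, but it uses the uniform second-derivative bound in \eqref{FjayAssump2} (so that $z\mapsto\partial_{z_j}F_j(z)$ is Lipschitz and the Sobolev chain rule applies) together with the normalization \eqref{FjayAssump3} (so that the composition retains zero trace), and is worth a sentence.
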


\par By Proposition \ref{DualityProp} in the appendix, there are smooth functions $G_1,\dots, G_N$ such that 
\be\label{GjayCond}
\begin{cases}
\partial_{z_j}F_j(z)=y_j\;\text{if and only if}\;\partial_{y_j}G_j(y)=z_j \\\\
|G_j(y)|\le B(1+|y|^2)\;\text{and}\quad |\partial_{y_j}G_j(y)|\le B(1+|y|)\\\\
\displaystyle 0\le \sum^{N}_{i=1}\left(\partial_{y_i}G_i(y)-\partial_{y_i}G_i(z)\right) (y_i-z_i)\\\\
\displaystyle |\partial_{y_k}\partial_{y_j}G_j(y)|\le \frac{1}{\theta}
\end{cases}
\ee
for all $y,z\in \R^N$, $j,k=1,\dots, N$ and some constant $B$.  Notice that in view of \eqref{FjayAssump3}
\be
\partial_{y_j}G_j(0)=0
\ee
for $j=1,\dots, N$. Thus, it is possible to solve \eqref{HminusNashCond} by choosing 
$$
w_j=(-\Delta)^{-1}h_j\in H^1_0(\Omega)
$$
for $j=1,\dots, N$ and then setting 
$$
v_j=\partial_{y_j}G_j(w)
$$
for $j=1,\dots, N$. In particular, we have just shown that $f_1,\dots, f_N$ has a Nash equilibrium, and it is not hard to 
see it is unique. 

\par Let us now see how to approximate this Nash equilibrium. 
\begin{prop}
Let $u^0\in H^1_0(\Omega)^N$. There is a unique Lipschitz continuous
$$
u: [0,\infty)\rightarrow H^{-1}(\Omega)^N;  t\mapsto u(\cdot,t)
$$ 
such that $u(\cdot,t)\in H^1_0(\Omega)^N$ for each $t\ge 0$ and  
\be
\begin{cases}
\partial_t u_j =\Delta(\partial_{z_j}F_j(u)) -h_j\;\; &\textup{in}\;\Omega\times(0,\infty)\\
\;\;\;u_j=0 \;\; & \textup{on}\; \partial\Omega\times(0,\infty)\\
\;\;\;u_j=u^0_j \;\; & \textup{on}\; \Omega\times\{0\}
\end{cases}
\ee
$(j=1,\dots, N$). Furthermore, there is $\lambda>0$ such that 
\be
\|u(\cdot,t)-v\|\le e^{-\lambda t}\| u^0-v\|
\ee
for $t\ge 0$. Here $\|\cdot \|$ denotes the norm on $H^{-1}(\Omega)^N$ and $v$ is the Nash equilibrium of $f_1,\dots,f_N$.
\end{prop}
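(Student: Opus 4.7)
The plan is to invoke Theorem \ref{mainthm1} with $V = L^2(\Omega)$, $H = H^{-1}(\Omega)$, and $X_j = V$ for each $j$; the continuous dense embedding $L^2(\Omega) \hookrightarrow H^{-1}(\Omega)$ realizes the Gelfand triple of subsection \ref{GenSettingSubsection}. The hypotheses \eqref{FjayAssump} and \eqref{FjayAssump2}, combined with the preceding lemma, show that $f_1, \dots, f_N$ satisfy \eqref{StandardAssumpf1thrufN}. Moreover, the strict monotonicity bound in \eqref{FjayAssump2} translates directly into the coercivity
\[
\sum_{j=1}^{N} (\nabla_{v_j} f_j(v) - \nabla_{v_j} f_j(w),\, v_j - w_j) \ge \theta \,\|v - w\|^2
\]
for all $v, w \in L^2(\Omega)^N$, placing us in the setting of Corollary \ref{ExistenceNashCoercive}.

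The first substantive step is to identify the effective domain ${\cal D}$. Since each $X_j = V$, the normal cones vanish, and ${\cal D}$ consists of those $v \in L^2(\Omega)^N$ for which each $\nabla_{v_j} f_j(v)$ corresponds to an element of $H = H^{-1}(\Omega)$ under the Gelfand pairing. I would show ${\cal D} = H^1_0(\Omega)^N$. For the inclusion $H^1_0(\Omega)^N \subset {\cal D}$, fix $v \in H^1_0(\Omega)^N$; the bound $|\partial_{z_k}\partial_{z_j} F_j(z)| \le C$ from \eqref{FjayAssump2} together with the chain rule gives $\nabla(\partial_{z_j} F_j(v)) = \sum_k \partial_{z_k}\partial_{z_j} F_j(v)\, \nabla v_k \in L^2(\Omega)$, while \eqref{FjayAssump3} and $v|_{\partial\Omega} = 0$ yield $\partial_{z_j} F_j(v) \in H^1_0(\Omega)$. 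Since also $(-\Delta)^{-1} h_j \in H^1_0(\Omega)$, the formula $\nabla_{v_j} f_j(v) = \partial_{z_j} F_j(v) - (-\Delta)^{-1} h_j$ places this gradient in $H^1_0(\Omega) \subset H^{-1}(\Omega)$. Conversely, $v \in {\cal D}$ forces each $\partial_{z_j} F_j(v)$ into $H^1_0(\Omega)$, and invoking the dual functions $G_1, \dots, G_N$ from \eqref{GjayCond}, whose Hessians are bounded by $1/\theta$, yields $v_j = \partial_{y_j} G_j(\partial_{z_1} F_1(v), \dots, \partial_{z_N} F_N(v)) \in H^1_0(\Omega)$.

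With ${\cal D} = H^1_0(\Omega)^N$ in hand, Theorem \ref{mainthm1} delivers a unique Lipschitz $u : [0,\infty) \to H^{-1}(\Omega)^N$ with $u(\cdot,t) \in H^1_0(\Omega)^N$ for each $t \ge 0$, satisfying the abstract evolution $\dot u_j + \nabla_{v_j} f_j(u) \ni 0$ in the $H^{-1}(\Omega)$-sense. Using \eqref{HminusInnerObs} and applying the isometry $-\Delta : H^1_0(\Omega) \to H^{-1}(\Omega)$ to both sides of this equation converts it into the classical parabolic system stated in the proposition; the boundary condition is built into $u(\cdot,t) \in H^1_0(\Omega)^N$, and the initial condition holds in $H^{-1}(\Omega)^N$. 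The exponential contraction then follows immediately from Corollary \ref{ExistenceNashCoercive} with $\lambda = \theta / C^2$, where $C$ is the continuity constant of $L^2(\Omega) \hookrightarrow H^{-1}(\Omega)$, and where the unique Nash equilibrium is the $v \in H^1_0(\Omega)^N$ already constructed above the proposition via $v_j = \partial_{y_j} G_j((-\Delta)^{-1} h_1, \dots, (-\Delta)^{-1} h_N)$. The main obstacle I anticipate is the reverse inclusion ${\cal D} \subset H^1_0(\Omega)^N$: unlike the earlier propositions where elliptic regularity could be invoked on a linear principal part, here one must invert the nonlinear algebraic map $z \mapsto (\partial_{z_1} F_1(z), \dots, \partial_{z_N} F_N(z))$ at the level of Sobolev functions, which is why the convex duality structure supplied by \eqref{GjayCond} is essential.
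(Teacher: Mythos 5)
Your identification of the domain ${\cal D}=H^1_0(\Omega)^N$ via the dual functions $G_j$, and your derivation of the exponential rate from the coercivity in \eqref{FjayAssump2}, are sound and in fact more explicit than what the paper records. The gap is at the very first step: the functionals $f_j(v)=\int_\Omega F_j(v)\,dx-\langle h_j,v_j\rangle$ do \emph{not} satisfy \eqref{StandardAssumpf1thrufN} when $V=X_1=\cdots=X_N=L^2(\Omega)$. The third condition there requires $x\mapsto\sum_j f_j(y_j,x_{-j})$ to be \emph{weakly} continuous on $L^2(\Omega)^N$ (and the first requires weak lower semicontinuity of each $f_j$); for a nonlinear integrand with merely quadratic growth this fails on $L^2$, where there is no compactness --- e.g.\ with $F_1(z)=\tfrac{\theta}{2}z_1^2+z_2^2$ and $F_2(z)=\tfrac{\theta}{2}z_2^2$, which satisfy \eqref{FjayAssump2}, the cross term $x_2\mapsto\int_\Omega x_2^2\,dx$ is not weakly continuous. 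Since the proof of Theorem \ref{mainthm1} (and Corollary \ref{ExistenceNashCoercive}, which merely reruns it) hinges on exactly these properties through the Kakutani--Glicksberg--Fan argument, you cannot invoke either result with $V=L^2(\Omega)$, and the maximality of the operator --- the heart of the well-posedness claim --- is left unproved.

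The paper's way around this is precisely the duality structure you used only for the domain identification. It defines $A$ on $H^{-1}(\Omega)^N$ explicitly, checks coercivity as you do using \eqref{HminusInnerObs}, and then proves maximality by hand: the resolvent equation $v_j-\Delta(\partial_{z_j}F_j(v))=h_j+\zeta_j$ is converted, via the substitution $v_j=\partial_{y_j}G_j(w)$, into the semilinear system $\partial_{y_j}G_j(w)-\Delta w_j=h_j+\zeta_j$, which is the Nash system for the auxiliary functionals $g_j(w)=\int_\Omega G_j(w)+\tfrac12|\nabla w_j|^2\,dx-(h_j+\zeta_j,w_j)$ posed on $H^1_0(\Omega)^N$. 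For \emph{these} functionals, with $V=H^1_0(\Omega)$, the compact embedding $H^1_0(\Omega)\subset L^2(\Omega)$ restores the weak continuity of the zeroth-order terms, so \eqref{StandardAssumpf1thrufN} holds and Corollary \ref{ExistenceNashCoercive} supplies the required $w$. To close your argument you would need to add this step (or an equivalent surjectivity argument, e.g.\ along the lines of Theorem \ref{mainthm2} and Remark \ref{BrowderMintyRemark}, whose hypotheses of hemicontinuity and $\theta$-monotonicity on $V^N=L^2(\Omega)^N$ are satisfied here and avoid any weak-continuity requirement).
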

\begin{proof}
Define $A: H^{-1}(\Omega)^N\rightarrow 2^{H^{-1}(\Omega)^N}$ as
$$
Av
=\begin{cases}
\{(-\Delta(\partial_{z_1}F_1(v)) +h_1,\dots, -\Delta(\partial_{z_N}F_N(v)) +h_N)\}, & v\in H^1_0(\Omega)^N\\
\emptyset, &\text{otherwise}
\end{cases}
$$
for $v\in H^{-1}(\Omega)^N$.  We note that $D(A)=H^1_0(\Omega)^N$, and $0\in Av$ if and only if $v\in H^1_0(\Omega)^N$ is a Nash equilibrium for $f_1,\dots, f_N$.
By the second inequality listed in \eqref{FjayAssump2},
\begin{align}
\langle Av-Aw,v-w\rangle&= \int_\Omega\sum^N_{j=1}(\partial_{z_j}F_j(v)-\partial_{z_j}F_j(w))(v_j-w_j)dx\\
&\ge \theta \int_{\Omega}|v-w|^2dx\\
&\ge \lambda \|v-w\|^2
\end{align}
for $v,w\in H^1_0(\Omega)^N$ and some $\lambda>0$.

\par As a result, it suffices to check that $A$ is maximal. This amounts to finding a weak solution $v\in H^1_0(\Omega)^N$ of 
\be\label{AhMinusoneMaximal}
\begin{cases}
v_j-\Delta(\partial_{z_j}F_j(v))=h_j +\zeta_j &\textup{in}\;\Omega\\
\hspace{1.05in}v_j=0 &\textup{on}\;\partial\Omega
\end{cases}
\ee
for a given $\zeta\in H^{-1}(\Omega)^N$.   To this end, we consider the auxiliary problem of finding $w\in H^1_0(\Omega)^N$ such that
\be\label{AhMinusoneMaximal2}
\begin{cases}
\partial_{y_j}G_j(w)-\Delta w_j=h_j +\zeta_j &\textup{in}\;\Omega\\
\hspace{.98in}w_j=0 &\textup{on}\;\partial\Omega.
\end{cases}
\ee
If we can find $w$, then $v\in H^1_0(\Omega)^N$ defined by 
$$
v_j=\partial_{y_j}G_j(w)
$$
$(j=1,\dots, N)$ is a solution of \eqref{AhMinusoneMaximal}. 

\par Notice that \eqref{AhMinusoneMaximal2} has a solution if and only if 
\be\label{geejayAuxFuns}
g_j(w)=\int_{\Omega}G_j(w)+\frac{1}{2}|\nabla w_j|^2dx- (h_j +\zeta_j,w_j) 
\ee
($w\in H^1_0(\Omega)^N$, $j=1,\dots, N$) has a Nash equilibrium.  Here $(\cdot,\cdot)$ is the natural pairing between $H^{-1}(\Omega)$ and $H^1_0(\Omega)$. 
Using \eqref{GjayCond}, it is straightforward to check that $g_1,\dots, g_N$ satisfies the properties listed in \eqref{StandardAssumpf1thrufN} with $X_1=\dots =X_N=V=H^1_0(\Omega)$. 
As a result,  Corollary \ref{ExistenceNashCoercive} implies that $g_1,\dots, g_N$ has a Nash equilibrium.
\end{proof}

\subsection{$H=H^{1}_0(\Omega)$}
Set 
$$
f_j(v)=\int_{\Omega}F_j(\Delta v) -\nabla h_j\cdot \nabla v_j \;dx
$$
for $v\in (H^2(\Omega)\cap H^1_0(\Omega))^N$ and $j=1,\dots, N$. Here $h_1,\dots, h_N\in H^1_0(\Omega)$ and 
$$
\Delta v=(\Delta v_1,\dots, \Delta v_N).
$$
We will suppose $F_1,\dots, F_N: \R^N\rightarrow \R$ are smooth 
and satisfy \eqref{FjayAssump2} and \eqref{FjayAssump3}.  A few key observations regarding $f_1,\dots f_N$ are as follows. 

\begin{lem}
For each $v,w\in (H^2(\Omega)\cap H^1_0(\Omega))^N$ and $j=1,\dots, N$, 
$$
\partial_{v_j}f_j(v)=\{\nabla_{v_j}f_j(v)\}
$$
with 
$$
(\nabla_{v_j}f_j(v),w_j)=\int_\Omega \partial_{z_j}F_j(\Delta v)\Delta w_j-\nabla h_j\cdot \nabla w_jdx.
$$
Moreover, $v$ is a Nash equilibrium of $f_1,\dots, f_N$ if and only if it is a solution of 
\be
\begin{cases}\label{NashFjayHplusOne}
-\partial_{z_j}F_j(\Delta v)=h_j &\textup{in}\;\Omega\\
\hspace{0.71in}v_j=0 &\textup{on}\;\partial\Omega
\end{cases}
\ee
$(j=1,\dots, N)$.
\end{lem}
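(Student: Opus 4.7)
The plan is to compute the Gateaux derivative of each $f_j$ explicitly, verify that it is single-valued and given by the stated formula, and then deduce the Nash equilibrium condition from the convexity of $y\mapsto\sum_j f_j(y_j,v_{-j})$ together with a suitable integration by parts.

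First I would fix $v,w\in (H^2(\Omega)\cap H^1_0(\Omega))^N$ and consider $\varphi(t):= f_j(v+tw_je_j)$ for $t\in\R$, where $e_j$ is the $j$th standard unit vector in $\R^N$. Differentiation under the integral sign is legitimate because of the smoothness of $F_j$ together with the bound $|\partial_{z_j}F_j(z)|\le C(1+|z|)$ from \eqref{FjayAssump}, so $\varphi$ is differentiable and
\[
\varphi'(0)=\int_\Omega \partial_{z_j}F_j(\Delta v)\,\Delta w_j-\nabla h_j\cdot\nabla w_j\,dx.
\]
To conclude $\partial_{v_j}f_j(v)=\{\nabla_{v_j}f_j(v)\}$ with this linear functional, I would use convexity of $y_j\mapsto f_j(y_j,v_{-j})$, which follows from the fact that $z_j\mapsto F_j(z)$ is convex (a consequence of the monotonicity bound in \eqref{FjayAssump2} applied componentwise, together with the Hessian bound $|\partial_{z_k}\partial_{z_j}F_j|\le C$ which allows us to extract partial convexity). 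A convex, Gateaux differentiable functional has a singleton subdifferential equal to its derivative, yielding the claim.

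For the Nash equilibrium characterization, the second condition of \eqref{StandardAssumpf1thrufN} (which is inherited here from summing the $F_j$-contributions and using \eqref{FjayAssump2}) combined with the variational inequality characterization from the introduction tells us that $v$ is a Nash equilibrium if and only if $(\nabla_{v_j}f_j(v),w_j-v_j)\ge 0$ for all admissible $w_j$; since the domain $H^2(\Omega)\cap H^1_0(\Omega)$ is a linear space, this inequality collapses to the equality $(\nabla_{v_j}f_j(v),\phi)=0$ for every $\phi\in H^2(\Omega)\cap H^1_0(\Omega)$. Plugging in the formula for the derivative and integrating by parts in the second term, using $\phi\in H^1_0(\Omega)$ to kill boundary contributions, yields
\[
\int_\Omega\bigl(\partial_{z_j}F_j(\Delta v)+h_j\bigr)\Delta\phi\,dx=0.
\]

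The main obstacle, which I think is genuinely the crucial step, is passing from this orthogonality to the pointwise equation $-\partial_{z_j}F_j(\Delta v)=h_j$. For this I would use the fact that $-\Delta:H^2(\Omega)\cap H^1_0(\Omega)\to L^2(\Omega)$ is an isometric isomorphism (as recalled in the subsection on Sobolev spaces), so as $\phi$ ranges over $H^2(\Omega)\cap H^1_0(\Omega)$, $\Delta\phi$ ranges over all of $L^2(\Omega)$. The growth bound on $\partial_{z_j}F_j$ guarantees $\partial_{z_j}F_j(\Delta v)\in L^2(\Omega)$, and $h_j\in H^1_0(\Omega)\subset L^2(\Omega)$; hence $\partial_{z_j}F_j(\Delta v)+h_j$ is an $L^2$ function orthogonal to all of $L^2(\Omega)$, forcing it to vanish almost everywhere, which together with $v_j\in H^1_0(\Omega)$ gives the stated boundary value problem \eqref{NashFjayHplusOne}. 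The converse direction is immediate by reversing these steps.
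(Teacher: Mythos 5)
Your proof is correct and is exactly the routine computation the paper has in mind (the lemma is stated there without proof, as a ``key observation''): Gateaux differentiation under the integral sign, convexity of $f_j$ in its $j$th slot to identify the subdifferential with the singleton of the derivative, and surjectivity of $\Delta:H^2(\Omega)\cap H^1_0(\Omega)\to L^2(\Omega)$ to upgrade the orthogonality relation to the pointwise equation \eqref{NashFjayHplusOne}. One small bookkeeping slip: in this subsection the paper assumes only \eqref{FjayAssump2} and \eqref{FjayAssump3}, not \eqref{FjayAssump}, but the linear growth of $\partial_{z_j}F_j$ that you invoke follows from the bounded Hessian in \eqref{FjayAssump2} together with $\partial_{z_j}F_j(0)=0$, and the partial convexity of $z_j\mapsto F_j(z)$ is best obtained not from the Hessian bound but from the monotonicity inequality in \eqref{FjayAssump2} restricted to variations of the $j$th coordinate alone, which gives $\partial_{z_j}^2F_j\ge\theta>0$.
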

\begin{rem}
Above, $(\cdot,\cdot)$ denotes the natural pairing between $(H^2(\Omega)\cap H^1_0(\Omega))^*$ and $H^2(\Omega)\cap H^1_0(\Omega)$.  
\end{rem}

\par Using the functions $G_1,\dots, G_N$ from the previous subsection, we can solve 
\be
\begin{cases}
-\Delta v_j=-\partial_{y_j}G_j(-h) &\textup{in}\;\Omega\\
\hspace{0.26in}v_j=0 &\textup{on}\;\partial\Omega,
\end{cases}
\ee
for $j=1,\dots, N$ to obtain a solution of \eqref{NashFjayHplusOne}. It follows that $f_1,\dots, f_N$ has a unique Nash equilibrium $v$, which belongs to the space  
$$
{\cal D}=\left\{v\in(H^2(\Omega)\cap H^1_0(\Omega))^N: \Delta v\in H^1_0(\Omega)^N\right\}.
$$
As with our previous examples, we can approximate this Nash equilibrium with a continuous time flow. 

\begin{prop}
Let $u^0\in {\cal D}$. There is a unique Lipschitz continuous 
$$
u: [0,\infty)\rightarrow H^{1}_0(\Omega)^N;  t\mapsto u(\cdot,t)
$$ 
such that $u(\cdot,t)\in {\cal D}$ for each $t\ge 0$ and  
\be
\begin{cases}
\partial_t u_j =\partial_{z_j}F_j(\Delta u) +h_j\;\; &\textup{in}\;\Omega\times(0,\infty)\\
\;\;\;u_j=0 \;\; & \textup{on}\; \partial\Omega\times(0,\infty)\\
\;\;\;u_j=u^0_j \;\; & \textup{on}\; \Omega\times\{0\}
\end{cases}
\ee
$(j=1,\dots, N$). Furthermore, there is $\lambda>0$ such that 
\be
\int_{\Omega}|\nabla u(x,t)-\nabla v(x)|^2dx\le e^{-2\lambda t}\int_{\Omega}| \nabla u^0(x)- \nabla v(x)|^2dx
\ee
for $t\ge 0$. Here $v$ is the unique Nash equilibrium of $f_1,\dots,f_N$.
\end{prop}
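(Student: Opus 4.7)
\medskip
\noindent\emph{Plan of proof.}
The plan is to apply Theorem~\ref{mainthm2} together with Remark~\ref{BrowderMintyRemark} in the Gelfand triple $V\subset H\subset V^*$, where $V=H^2(\Omega)\cap H^1_0(\Omega)$ is equipped with the inner product $(u,w)\mapsto\int_\Omega\Delta u\,\Delta w\,dx$ and $H=H^1_0(\Omega)$ with $(u,w)\mapsto\int_\Omega\nabla u\cdot\nabla w\,dx$. Single-valuedness of each $\partial_{v_j}f_j$ is already supplied by the preceding lemma, so to verify \eqref{ThirdAssumpf1thrufN} I only need hemicontinuity and monotonicity. For hemicontinuity the uniform Hessian bound $|\partial_{z_k}\partial_{z_j}F_j|\le C$ from \eqref{FjayAssump2} makes $\partial_{z_j}F_j$ globally Lipschitz, whence $\partial_{z_j}F_j(\Delta(v+tw))\to\partial_{z_j}F_j(\Delta v)$ strongly in $L^2(\Omega)$ as $t\to 0^+$, which is more than sufficient to pass to the limit in the lemma's formula. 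For monotonicity and strict coercivity I apply the pointwise estimate in \eqref{FjayAssump2} with arguments $\Delta v(x)$ and $\Delta w(x)$ and integrate, obtaining
\[
\sum_{j=1}^N\bigl(\nabla_{v_j}f_j(v)-\nabla_{v_j}f_j(w),\,v_j-w_j\bigr)\ge \theta\int_\Omega|\Delta v-\Delta w|^2\,dx=\theta\|v-w\|_V^2.
\]

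Next I identify the domain $\mathcal{D}$ and translate the abstract flow into the stated PDE. Under the Gelfand embedding $H\subset V^*$, a function $\eta\in H^1_0(\Omega)$ pairs with $\phi\in V$ via $(\eta,\phi)_H=\int_\Omega\nabla\eta\cdot\nabla\phi\,dx=-\int_\Omega\eta\,\Delta\phi\,dx$. If $v\in V^N$ satisfies $\Delta v\in H^1_0(\Omega)^N$, then the chain rule combined with $\partial_{z_j}F_j(0)=0$ and the Hessian bound shows $\partial_{z_j}F_j(\Delta v)\in H^1_0(\Omega)$; integrating by parts in the lemma's formula, and using $h_j\in H^1_0(\Omega)$, then gives for every $\phi\in V$
\[
(\nabla_{v_j}f_j(v),\phi)=-\int_\Omega\nabla\bigl(\partial_{z_j}F_j(\Delta v)+h_j\bigr)\cdot\nabla\phi\,dx=-\bigl(\partial_{z_j}F_j(\Delta v)+h_j,\,\phi\bigr)_H.
\]
Hence $\nabla_{v_j}f_j(v)\in H$ precisely when $\Delta v\in H^1_0(\Omega)^N$, so the domain $\mathcal{D}$ of Theorem~\ref{mainthm2} coincides with the set named in the statement, and, as an element of $H^1_0(\Omega)$, $\nabla_{v_j}f_j(v)=-(\partial_{z_j}F_j(\Delta v)+h_j)$. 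Substituting this into the abstract equation $\dot u_j+\nabla_{v_j}f_j(u)=0$ produces exactly $\partial_t u_j=\partial_{z_j}F_j(\Delta u)+h_j$; the Dirichlet boundary condition is inherited from $u(t)\in\mathcal{D}\subset H^1_0(\Omega)^N$ and the initial condition from $u(0)=u^0$.

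With those identifications in place, Theorem~\ref{mainthm2} delivers the unique Lipschitz $u\colon[0,\infty)\to H^1_0(\Omega)^N$ with $u(t)\in\mathcal{D}$, and Remark~\ref{BrowderMintyRemark} upgrades this to exponential contraction toward the (already constructed) Nash equilibrium $v$ at a rate $\lambda=\theta/C^2$, where $C$ is the constant of the continuous embedding $V^N\hookrightarrow H^N$. Since the squared $H$-norm is $\int_\Omega|\nabla\cdot|^2\,dx$, this is precisely the displayed decay estimate. The main obstacle I anticipate is the careful bookkeeping in the second paragraph: the chain-rule assertion $\partial_{z_j}F_j(\Delta v)\in H^1_0(\Omega)$ whenever $\Delta v\in H^1_0(\Omega)^N$, together with the Gelfand pairing that correctly merges the $-\int\nabla h_j\cdot\nabla v_j\,dx$ contribution of $f_j$ into the additive $+h_j$ on the right-hand side of the PDE. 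Once that bookkeeping is done, existence, uniqueness and exponential decay follow by direct citation of the abstract results.
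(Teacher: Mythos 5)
Your argument is correct in substance but takes a genuinely different route to maximality than the paper does. The paper works directly with the operator $Av=(-\partial_{z_1}F_1(\Delta v)-h_1,\dots,-\partial_{z_N}F_N(\Delta v)-h_N)$ on $H^1_0(\Omega)^N$ with domain ${\cal D}$, establishes the same coercivity $\langle Av-Aw,v-w\rangle\ge\theta\int_\Omega|\Delta v-\Delta w|^2dx$, and then verifies Minty's criterion by hand: the resolvent equation $v_j-\partial_{z_j}F_j(\Delta v)=h_j+\zeta_j$ is converted, via the dual functions $G_j$ of Proposition \ref{DualityProp}, into the semilinear system $\partial_{y_j}G_j(v-h-\zeta)-\Delta v_j=0$, which is the Nash system for the auxiliary functionals $g_j(v)=\int_\Omega G_j(v-h-\zeta)+\frac12|\nabla v_j|^2dx$, and Corollary \ref{ExistenceNashCoercive} supplies a solution. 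You instead set up the Gelfand triple $V=H^2(\Omega)\cap H^1_0(\Omega)\subset H^1_0(\Omega)=H$ and delegate maximality to the Browder--Minty surjectivity theorem inside Theorem \ref{mainthm2}; your observation that the monotonicity is in fact $\theta$-coercive in the $V$-norm is exactly what makes the coercivity hypothesis of Browder--Minty hold in this triple, so the reduction is legitimate, and Remark \ref{BrowderMintyRemark} then yields the stated exponential decay in the $H^1_0$ norm. What your route buys is uniformity with the paper's treatment of the second $L^2(\Omega)$ example and the avoidance of the explicit resolvent computation; what the paper's route buys is that the dual functions $G_j$, which it needs anyway to construct the Nash equilibrium before the proposition, do double duty for maximality. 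One point you should not wave off as mere bookkeeping: the reverse inclusion in your claim that $\nabla_{v_j}f_j(v)\in H$ for all $j$ \emph{precisely when} $\Delta v\in H^1_0(\Omega)^N$ (needed so that $u(t)\in D(A)$ really gives $u(t)\in{\cal D}$ as defined in the statement) requires inverting $z\mapsto(\partial_{z_1}F_1(z),\dots,\partial_{z_N}F_N(z))$ with a Lipschitz inverse vanishing at the origin --- that is, it quietly uses properties (i), (ii) and \eqref{FjayAssump3} for the $G_j$ from Proposition \ref{DualityProp}, so you should cite them there as well.
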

\begin{proof}
Define $A: H^1_0(\Omega)^N\rightarrow 2^{H^1_0(\Omega)^N}$ by 
\be
Av=
\begin{cases}
\{ \left(-\partial_{z_1}F_1(\Delta v) -h_1,\dots,-\partial_{z_N}F_N(\Delta v) -h_N\right) \}, & v\in {\cal D}\\
\emptyset, & \text{otherwise}
\end{cases}
\ee
for $v\in H^1_0(\Omega)^N$.  Note that $D(A)={\cal D}$, and $0\in Av$ if and only if $v\in {\cal D}$ is a Nash equilibrium for $f_1,\dots, f_N$. In view of \eqref{FjayAssump2},
\begin{align}
\langle Av-Aw,v-w\rangle &=\int_\Omega\sum^N_{j=1}(\partial_{z_j}F_j(\Delta v)-\partial_{z_j}F_j(\Delta w) )(\Delta v_j-\Delta w_j)dx\\
& \ge \theta \int_{\Omega}|\Delta v-\Delta w|^2dx\\
&\ge \lambda  \int_{\Omega}|\nabla v-\nabla w|^2dx.
\end{align}
for $v,w\in {\cal D}$ and some $\lambda>0$.  

\par Therefore, it suffices to show 
\be
\begin{cases}\label{MaximalFjayHplusOne}
v_j-\partial_{z_j}F_j(\Delta v)=h_j+\zeta_j &\textup{in}\;\Omega\\
\hspace{.93in}v_j=0 &\textup{on}\;\partial\Omega
\end{cases}
\ee
has a solution $v\in (H^2(\Omega)\cap H^1_0(\Omega))^N$ for a given $\zeta\in H^1_0(\Omega)^N$. We note that any such solution $v$ will automatically 
belong to ${\cal D}$. Moreover, this is equivalent to finding a weak solution $v\in H^1_0(\Omega)^N$ of
\be
\begin{cases}\label{MaximalFjayHplusOne2}
\partial_{y_j}G_j(v-h-\zeta)-\Delta v_j= 0&\textup{in}\;\Omega\\
\hspace{1.55in}v_j=0 &\textup{on}\;\partial\Omega.
\end{cases}
\ee
We emphasize that elliptic regularity implies that any such weak solution actually belongs to $(H^2(\Omega)\cap H^1_0(\Omega))^N$.

\par Any $v\in H^1_0(\Omega)^N$ which solves \eqref{MaximalFjayHplusOne2} is a Nash equilibrium of $g_1,\dots, g_N$ defined by 
\be
g_j(v)=\int_{\Omega}G_j(v-h-\zeta)+\frac{1}{2}|\nabla v_j|^2 dx.
\ee
As we did when considering the functions defined in \eqref{geejayAuxFuns}, we can apply Corollary \ref{ExistenceNashCoercive} and  conclude that $g_1,\dots, g_N$ has a unique Nash 
equilibrium. 
\end{proof}

\appendix
\section{Dual functions}
In this appendix, we will verify a technical assertion that was used to establish the existence of a Nash equilibrium in a few of the examples we considered above.
\begin{prop}\label{DualityProp}
Suppose $F_1,\dots, F_N: \R^N\rightarrow \R$ are smooth and satisfy 
\be\label{FjayAssumpLast}
|F_j(z)|\le C(1+|z|^2)
\ee
for $z\in \R^N$ and $j=1,\dots, N$ and 
\be\label{JointConvexity}
 \theta |z-w|^2\le \sum^{N}_{j=1}\left(\partial_{z_j}F_j(z)-\partial_{z_j}F_j(w)\right) (z_j-w_j)
\ee
for $z,w\in \R^N$ and some $\theta>0$.  There are smooth functions $G_1,\dots,G_N: \R^N\rightarrow \R$ with the following properties. 
\begin{enumerate}[(i)]

\item $\partial_{z_j}F_j(z)=y_j$ if and only if $\partial_{y_j}G_j(y)=z_j$.

\item For all $j,k=1,\dots, N$ and $y\in \R^N$,
\be
|\partial_{y_k}\partial_{y_j}G_j(y)|\le \frac{1}{\theta}.
\ee

\item For each $y,w\in \R^N$,
\be\label{JointConvexityGjay}
0\le \sum^{N}_{j=1}\left(\partial_{y_j}G_j(y)-\partial_{y_j}G_j(w)\right) (y_j-w_j).
\ee

\item There is a constant $B$ such that 
\be\label{GjayAssump}
|G_j(y)|\le B(1+|y|^2)\;\text{and}\quad |\partial_{y_j}G_j(y)|\le B(1+|y|)
\ee
for $y\in \R^N$ and $j=1,\dots, N$.

\end{enumerate}
\end{prop}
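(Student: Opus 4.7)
The plan is to construct $G_1,\dots,G_N$ by inverting the map $\Phi:\R^N\to\R^N$ with components $\Phi_j(z):=\partial_{z_j}F_j(z)$, and then integrating the $j$th component of $\Phi^{-1}$ along the $y_j$ axis. Hypothesis \eqref{JointConvexity} says exactly that $\Phi$ is strongly monotone: $(\Phi(z)-\Phi(w))\cdot(z-w)\ge \theta|z-w|^2$ for all $z,w\in\R^N$.

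First I would show that $\Phi$ is a smooth diffeomorphism of $\R^N$ onto itself. Strong monotonicity immediately yields injectivity and also $v\cdot D\Phi(z)v\ge\theta|v|^2$ for every $v\in\R^N$; in particular $D\Phi(z)$ is everywhere invertible with $\|D\Phi(z)^{-1}\|_{\mathrm{op}}\le 1/\theta$ (apply the previous inequality to $v:=D\Phi(z)^{-1}w$ and use Cauchy--Schwarz). For surjectivity I would fix $y\in\R^N$ and note the coercivity estimate $(\Phi(z)-y)\cdot z\ge\theta|z|^2+(\Phi(0)-y)\cdot z$, which is strictly positive once $|z|$ is large; a standard Brouwer degree argument (or the finite-dimensional Browder--Minty theorem) then produces $z$ with $\Phi(z)=y$. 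The inverse function theorem guarantees $\Psi:=\Phi^{-1}$ is smooth.

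Having obtained $\Psi$, I would define
\[
G_j(y):=\int_0^{y_j}\Psi_j(y_1,\dots,y_{j-1},s,y_{j+1},\dots,y_N)\,ds,
\]
so $\partial_{y_j}G_j=\Psi_j$. Property (i) is then just the statement that $\Psi$ inverts $\Phi$. For (ii), differentiating $\Phi\circ\Psi=\mathrm{id}$ gives $D\Psi(y)=D\Phi(\Psi(y))^{-1}$, so every entry of $D\Psi$ is bounded by $\|D\Phi(\Psi(y))^{-1}\|_{\mathrm{op}}\le 1/\theta$. Property (iii) follows by substituting $y=\Phi(z)$ and $w=\Phi(\tilde z)$ in the left hand side of \eqref{JointConvexityGjay} and applying the strong monotonicity of $\Phi$. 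For (iv), taking $w=0$ in the monotonicity bound yields $|\Phi(z)-\Phi(0)|\ge\theta|z|$, which gives the linear growth $|\Psi_j(y)|\le|\Psi(y)|\le(|y|+|\Phi(0)|)/\theta$; the quadratic bound on $G_j$ then follows by direct integration in the $y_j$ variable.

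The main obstacle is the surjectivity of $\Phi$: every other step reduces to algebraic manipulation of the strong monotonicity hypothesis, whereas surjectivity requires a genuine topological input. I note that the integrability question one usually faces when constructing a potential is absent here because we prescribe only the single partial derivative $\partial_{y_j}G_j$ of each $G_j$, so there are no cross-compatibility conditions to verify.
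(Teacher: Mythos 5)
Your proof is correct, and it takes a genuinely different route from the paper's at the key step. The invertibility of the gradient map $\Phi=(\partial_{z_1}F_1,\dots,\partial_{z_N}F_N)$ is handled the same way in both arguments (strong monotonicity plus a Browder--Minty or degree-theoretic input), and your derivation of (ii) from $D\Psi=(D\Phi\circ\Psi)^{-1}$ together with the operator-norm bound $\|D\Phi(z)^{-1}\|_{\mathrm{op}}\le 1/\theta$ is the same estimate the paper extracts in \eqref{LowerMonEstG}, just packaged more cleanly. The real divergence is the construction of $G_j$: the paper takes the Legendre-type expression $G_j(y)=z_jy_j-F_j(z)$ with $z=\Phi^{-1}(y)$ and asserts $\partial_{y_j}G_j(y)=z_j$, whereas you integrate $\Psi_j=(\Phi^{-1})_j$ along the $y_j$-axis so that $\partial_{y_j}G_j=\Psi_j$ holds by construction. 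Your route buys two things. First, it sidesteps a delicate point in the Legendre computation: differentiating $z_j(y)y_j-F_j(z(y))$ in $y_j$ leaves the residual term $-\sum_{k\neq j}\partial_{z_k}F_j(z)\,\partial_{y_j}z_k$, which cancels in the classical one-function Legendre transform (where all partials of $F$ are pinned to $y$) but is not obviously zero here, since \eqref{DerivativeDualCond} fixes only the diagonal partials $\partial_{z_j}F_j$; your definition has no such cancellation to verify, and since your $G_j$ has the same $\partial_{y_j}$-derivative as the paper's, it serves the later applications equally well. Second, your proof of (iv) obtains $|\Psi(y)|\le(|y|+|\Phi(0)|)/\theta$ directly from strong monotonicity and then integrates, so the quadratic-growth hypothesis \eqref{FjayAssumpLast} is never needed, whereas the paper uses it to control $|F_j(z)|$ inside its formula for $G_j$. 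Your argument for (iii), substituting $y=\Phi(z)$ and $w=\Phi(\tilde z)$ and invoking monotonicity of $\Phi$, is likewise more direct than the paper's integration of the Hessian inequality along a segment. As you note, no integrability (cross-compatibility) condition arises because only the single partial $\partial_{y_j}G_j$ of each $G_j$ is prescribed; that observation is exactly what legitimizes the one-variable integration.
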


\begin{proof}
First we note that the mapping of $\R^N$
\be\label{GradientMapRN}
z\mapsto (\partial_{z_1}F_1(z),\dots,\partial_{z_N}F_N(z))
\ee
is invertible. This follows from the theorem due to Browder \cite{MR156116} and Minty \cite{MR162159} as \eqref{JointConvexity} implies that this map is both monotone and coercive;  invertibility is a consequence of Hadamard's global invertibility theorem (Theorem A of \cite{MR305418}), as well. The condition \eqref{JointConvexity} also gives \be\label{HessianInequalityLeg}
 \theta|v|^2\le\sum^N_{j,k=1}\partial_{z_k}\partial_{z_j}F_j(z)v_kv_j
\ee
for each $z,v\in \R^N$.

 \par Now fix $y\in \R^N$ and let $z\in \R^N$ be the unique Nash equilibrium of 
\be\label{feyeshiftedFun}
\R^N\ni x\mapsto F_j(x)-y_jx_j
\ee
$(j=1,\dots, N)$.  Such a Nash equilibrium exists for these $N$ functions by Corollary \ref{ExistenceNashCoercive}.  As a result,
\be\label{DerivativeDualCond}
\partial_{z_j}F_j(z)=y_j, \quad j=1,\dots, N.
\ee
Since \eqref{GradientMapRN} is invertible and smooth, \eqref{DerivativeDualCond} determines $z$ as a smooth
function of $y$. Let us now define 
\be\label{DefinitionGjay}
G_j(y):=z_jy_j-F_j(z)
\ee
for $j=1,\dots, N$. Differentiating with respect to $y_j$ and using \eqref{DerivativeDualCond}, we find 
$$
\partial_{y_j}G_j(y)=z_j.
$$
This proves $(i)$ and that
\be\label{GradientMapRN2}
y\mapsto (\partial_{y_1}G_1(y),\dots,\partial_{y_N}G_N(y))
\ee
is the inverse mapping of \eqref{GradientMapRN}.

\par It follows from part $(i)$ and the inverse function theorem that whenever $\partial_{y_j}G_j(y)=z_j$ and 
$\partial_{z_j}F_j(z)=y_j$ for $j=1,\dots, N$, the matrices 
$$
(\partial_{z_k}\partial_{z_j}F_j(z))^N_{j,k=1}\text{ and  }(\partial_{y_k}\partial_{y_j}G_j(y))^N_{j,k=1}
$$ 
are inverses. Consequently, for a given $w\in \R^N$, we can choose $v\in \R^N$ defined as
$$
v_j=\sum^N_{k=1}\partial_{y_k}\partial_{y_j}G_j(y)w_k
$$
in \eqref{HessianInequalityLeg} to get 
\begin{align}\label{LowerMonEstG}
0&\le \theta\sum^N_{j=1}\left(\sum^N_{k=1}\partial_{y_k}\partial_{y_j}G_j(y)w_k\right)^2\nonumber\\ 
&\le\sum^N_{j,k=1}\partial_{y_k}\partial_{y_j}G_j(y)w_kw_j\\
&=\sum^N_{j=1}\left(\sum^N_{k=1}\partial_{y_k}\partial_{y_j}G_j(y)w_k\right)w_j\\
&\le \sqrt{\sum^N_{j=1}\left(\sum^N_{k=1}\partial_{y_k}\partial_{y_j}G_j(y)w_k\right)^2}\cdot |w|.
\end{align}

\par As a result,
\be
\sum^N_{j=1}\left(\sum^N_{k=1}\partial_{y_k}\partial_{y_j}G_j(y)w_k\right)^2\le\frac{1}{\theta^2}|w|^2.
\ee
Upon selecting $w=e_k$, the $k$th standard unit vector in $\R^N$, we find
$$
(\partial_{y_k}\partial_{y_j}G_j(y))^2=\left(\sum^N_{\ell=1}\partial_{y_\ell}\partial_{y_j}G_j(y)w_\ell\right)^2\le \sum^N_{i=1}\left(\sum^N_{\ell=1}\partial_{y_\ell}\partial_{y_i}G_i(y)w_\ell\right)^2
\le \frac{1}{\theta^2}
$$
for $j,k=1,\dots, N$.  We conclude assertion $(ii)$.  

\par Part $(iii)$ follows directly from \eqref{LowerMonEstG}. Indeed, for any $y,w\in \R^N$, we can use the fundamental 
theorem of calculus to derive
\begin{align}
&\sum^{N}_{j=1}\left(\partial_{y_j}G_j(y)-\partial_{y_j}G_j(w)\right)(y_j-w_j)\\
&=\int^1_0\left(\sum^{N}_{j,k=1}\partial_{y_k}\partial_{y_j}G_j(w+t(y-w))(y_k-w_k)(y_j-w_j)\right)dt\\
&\ge 0.
\end{align}

\par We are left to verify assertion $(iv)$. First note that by part $(ii)$, 
\begin{align}
|\partial_{y_j}G_j(y)| & \le|\partial_{y_j}G_j(y)-\partial_{y_j}G_j(0)|+|\partial_{y_j}G_j(0)|\\
&=\left|\int^1_0\sum^{N}_{k=1}\partial_{y_k}\partial_{y_j}G_j(ty)y_k dt\right|+|\partial_{y_j}G_j(0)|\\
&\le\sum^{N}_{k=1}\left(\int^1_0\left|\partial_{y_k}\partial_{y_j}G_j(ty)\right|dt\right) |y_k| +|\partial_{y_j}G_j(0)|\\
&\le \frac{1}{\theta}\sum^{N}_{k=1}|y_k|+|\partial_{y_j}G_j(0)|\\
&\le D(1+|y|)
\end{align}
for some $D>0$ independent of $j=1,\dots, N$ and $y$.

\par For a given $y\in \R^N$ there is a unique $z$ such that $\partial_{z_j}F_j(z)=y_j$ and $\partial_{y_j}G_j(y)=z_j$ for $j=1,\dots, N$. We just showed above that 
$$
|z_j|=|\partial_{y_j}G_j(y)|\le D(1+|y|).
$$
In view of hypothesis, \eqref{FjayAssumpLast} and our definition of $G_j$ \eqref{DefinitionGjay}, we additionally have 
\begin{align}
|G_j(y)|&=|z_jy_j-F_j(z)|\\
&\le |z_j||y_j|+|F_j(z)|\\
&\le  D(1+|y|)|y|+C(1+|z|^2)\\
&\le  D(1+|y|)|y|+C(1+ND^2(1+|y|)^2)\\
&\le B(1+|y|^2)
\end{align}
for an appropriately chosen constant $B$.  
\end{proof}

\bibliography{NEbib}{}

\bibliographystyle{plain}

\typeout{get arXiv to do 4 passes: Label(s) may have changed. Rerun}

\typeout{get arXiv to do 4 passes: Label(s) may have changed. Rerun}

\end{document}